\newtheorem{theorem}{Theorem}
\newtheorem*{theorem*}{Theorem}
\numberwithin{equation}{section}
\numberwithin{theorem}{section}
\newtheorem*{acknowledgement*}{Acknowledgement}
\newtheorem*{claim*}{Claim}
\newtheorem*{definition*}{Definition}
\newtheorem{corollary}[theorem]{Corollary}
\newtheorem{lemma}[theorem]{Lemma}
\newtheorem{proposition}[theorem]{Proposition}
\newcommand{\RR}[0]{\mathbb{R}}
\newcommand{\CC}[0]{\mathbb{C}}
\newcommand{\pd}[2]{\frac{\partial #1}{\partial#2}}
\newcommand{\delb}[0]{\overline{\nabla}}
\newcommand{\delt}[0]{\widetilde{\nabla}}
\newcommand{\Rc}[0]{\operatorname{Rc}}
\newcommand{\Rm}[0]{\operatorname{Rm}}
\newcommand{\pdtau}[0]{\pd{}{\tau}}
\newcommand{\lam}{\lambda}
\newcommand{\Isom}[0]{\operatorname{Isom}}
\newcommand{\Cc}[0]{\mathcal{C}}
\newcommand{\Ac}[0]{\mathcal{A}}
\title[Isometries of asymptotically conical shrinkers]{Isometries of asymptotically conical \\shrinking Ricci solitons}
\author{Brett Kotschwar}
\address{Arizona State University, Tempe, AZ, USA}
\email{kotschwar@asu.edu}
\author{Lu Wang}
\address{University of Wisconsin, Madison, WI, USA}
\email{luwang@math.wisc.edu}
\thanks{The first author was supported in part by Simons Foundation grant \#359335. The second author was partially supported by the NSF Grants DMS-1811144 and DMS-1834824, an Alfred P. Sloan Research Fellowship, the funding from the Wisconsin Alumni Research Foundation, and a Vilas Early Career Investigator Award. The second author was also supported by a von Neumann Fellowship by the Institute for Advanced Study: This material is based upon work supported by the Z\"{u}rich Insurance Company Membership and the NSF Grant DMS-1638352.}
\begin{document}
\begin{abstract} 
We show that if a shrinking soliton is asymptotic to a cone along an end then the isometry group of the cross-section of the cone
embeds in the isometry group of the end of the shrinker. We also provide sufficient conditions for the isometries of the end to extend to
the entire shrinker.

\end{abstract}
\maketitle

\section{Introduction}
Shrinking Ricci solitons model the geometry of solutions to the Ricci flow in the vicinity of a developing singularity.
All known
complete noncompact shrinking Ricci solitons which are not locally reducible as a product are smoothly asymptotic to a cone at infinity \cite{DancerWang, FeldmanIlmanenKnopf, Yang}. In four dimensions, there is growing evidence to suggest that the asymptotically conical shrinkers are the only nontrivial
complete noncompact examples. Their classification is vital
to the understanding of the long-term behavior of the equation and to potential future topological applications. 

In our previous work
\cite{KotschwarWangConical}, we have shown that these cones essentially determine the shrinker: whenever two shrinkers are asymptotic to the same cone
along some ends of each, they must be isometric near infinity on those ends. This reduces the classification to that of their possible asymptotic cones. A reasonable first step toward an understanding of what cones can occur is to identify the geometric features which an asymptotically conical shrinker and its asymptotic cone must share in common.

The purpose of this note is to detail the application of the uniqueness theorem in \cite{KotschwarWangConical} to the relationship between the isometry group of cone and shrinker.  A direct application of that theorem implies that any symmetry of the asymptotic cone must, at least, be reflected in a symmetry of the 
shrinker on some neighborhood of infinity. The precise statement is this: 
if a shrinker $(M, g, f)$ is asymptotic to a cone $\Cc$
along an end $V\subset (M, g)$, then, for any isometry $\phi$ of $\Cc\setminus\{\mathcal{O}\}$, there is an end $W\subset V$ and a diffeomorphism 
$F: W\to \Cc\setminus \{r \leq r_0\}$
such that $F^{-1}\circ \phi \circ F$ is an isometry of $(W, g|_W)$. The argument in \cite{KotschwarWangConical}
does not provide a completely effective bound on the size of the end $W$, however, and, by itself, does not preclude that $F$ and $W$ may 
depend on the isometry $\phi$.  
 
In this paper, we demonstrate
that a uniform choice of $F$ and $W$ can be made for which the above correspondence provides an isomorphism between
the vertex-fixing isometries of the cone and the isometries of the restriction of the shrinker to $W$.
When the shrinker is complete, and the end is large enough, topologically,
we show that the stabilizer of the vertex of the cone embeds into the isometry group of the entire shrinker. 
This topological condition is satisfied,
 for example, by the end of any complete K\"ahler shrinking soliton.

We will need to introduce some notation to state our main result. Let $\Cc^{\Sigma}$ denote the cone over the smooth compact $(n-1)$-dimensional manifold $(\Sigma, g_\Sigma)$,
with vertex $\mathcal{O}$. For $a \geq 0$, let $\Cc_a^{\Sigma} =  (a, \infty)\times \Sigma$, and denote by $\hat{g} = dr^2+r^2g_{\Sigma}$
the conical metric on $\Cc_0^{\Sigma} = \Cc^{\Sigma}\setminus\{\mathcal{O}\}$. Finally, for $\lambda > 0$, let $\rho_{\lambda}:\Cc_0^{\Sigma}\to \Cc_0^{\Sigma}$ denote the dilation map $\rho_{\lambda}(r, \sigma) = (\lambda r, \sigma)$.

Following \cite{KotschwarWangConical}, we will then say that a Riemannian manifold $(M, g)$ is \emph{asymptotic to $\Cc^{\Sigma}$ along the end $V\subset (M, g)$}
if, for some $a > 0$, there is a diffeomorphism $F:\Cc_a^{\Sigma}\to V$ such that $\lam^{-2}\rho_{\lambda}^*F^\ast g\to \hat{g}$
as $\lambda \to \infty$
in $C^2_{\emph{loc}}(\Cc_0^{\Sigma}, \hat{g})$. We will say that a soliton $(M, g, f)$ is asymptotic to $\Cc^{\Sigma}$ along $V$ when $(M, g)$ is. Here, by \emph{end},
we mean an unbounded connected component of the complement of a compact set.  

By \cite{MunteanuWangConical1}, \cite{MunteanuWangConical2}, any complete shrinker which satisfies $|\Rc|(x)\to 0$ as $x\to \infty$ 
will be asymptotic to a cone on each of its ends in the above sense. 
In dimension four, the same is true only assuming the scalar curvature tends to zero at infinity. The uniqueness of the asymptotic cone is discussed in 
\cite{ChowLuAsymptoticCone}.

The first nontrivial examples of complete asymptotically conical shrinkers were exhibited by Feldman-Ilmanen-Knopf \cite{FeldmanIlmanenKnopf}. 
Their construction uses an ansatz of $\operatorname{U}(m)$-symmetry to produce complete K\"ahler shrinkers on the tautological line bundle of $\CC P^{m-1}$ for each $m\geq 2$. This construction was generalized by Dancer-Wang \cite{DancerWang} and Yang \cite{Yang} to produce further complete K\"ahler asymptotically conical  examples on line bundles over products of K\"ahler-Einstein metrics with positive scalar curvature.

\begin{theorem}\label{thm:isom}
 Suppose the shrinking soliton $(M, g, f)$ is asymptotic to $\mathcal{C}^{\Sigma}$ along the end $V\subset (M, g)$.  Then
 there is $a\geq 0$, an end $W$ of $(M, g)$ with $W\subset V$, and a diffeomorphism $F: W \to \Cc_a^{\Sigma}$ such that 
 $\gamma \mapsto F^{-1}\circ (\operatorname{Id}\times\gamma) \circ F$ is an isomorphism from $\operatorname{Isom}(\Sigma, g_{\Sigma})$ onto $\operatorname{Isom}(W, g|_W)$.
\end{theorem}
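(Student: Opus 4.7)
The plan is to lift each isometry of $\Sigma$ to an isometry of an end of $(M,g)$ using the uniqueness theorem of \cite{KotschwarWangConical}, organize the lifts into a faithful action of the compact group $G := \Isom(\Sigma, g_\Sigma)$ on a single end $W$, and then straighten the action to the coordinate action on the cone by exploiting the gradient flow of the soliton potential~$f$. For the lifting, fix the asymptotic diffeomorphism $F_0:\Cc_a^\Sigma\to V$. For each $\gamma\in G$, the map $F_0\circ (\Id\times\gamma):\Cc_a^\Sigma\to V$ is a second asymptotic identification of the same shrinker end (since $\Id\times\gamma$ is an isometry of $\hat g$), so applying the uniqueness theorem to $(V,g,f)$ compared with itself via these two identifications yields an isometry $\Psi_\gamma$ of an end of $V$ whose asymptotic behavior matches $F_0\circ(\Id\times\gamma)\circ F_0^{-1}$. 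Beyond existence, I will need that $\Psi_\gamma$ is determined by $\gamma$---equivalently, that an isometry of an end of $(M,g)$ asymptotic to the identity must itself be the identity on a possibly smaller end---which should follow from the backward uniqueness argument underlying \cite{KotschwarWangConical}. Uniqueness then forces the group law $\Psi_{\gamma_1\gamma_2}=\Psi_{\gamma_1}\circ\Psi_{\gamma_2}$ on intersections of domains, and, together with compactness of $G$, yields a single end $W\subset V$ on which all $\Psi_\gamma$ are simultaneously defined and $G$ acts by isometries.

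To produce $F:W\to\Cc_a^\Sigma$ intertwining the two $G$-actions, I observe that the soliton equation and its consequence $|\del f|^2+R=f+\text{const}$ make $f$ an intrinsic geometric invariant of $g$, so every $\Psi_\gamma$ preserves $f$ and hence each level set $L_c:=\{f=c\}\cap W$. For $c$ sufficiently large, $L_c$ is compact, $G$-invariant, and, in the rescaling appropriate to the cone, $G$-equivariantly $C^1$-close to $(\Sigma,g_\Sigma)$ by the asymptotic hypothesis. The equivariant tubular neighborhood/slice theorem then produces a $G$-equivariant diffeomorphism $F_1:L_{c_0}\to\Sigma$ for some large $c_0$. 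Since the gradient flow of $\del f/|\del f|^2$ is $G$-invariant (as both $f$ and $g$ are preserved by the action), extending $F_1$ along this flow---with time parameter normalized to match the cone radius $r=2\sqrt f$---produces the desired $G$-equivariant diffeomorphism $F$ on $W':=\{f\geq c_0\}\cap W$.

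For surjectivity, any $\psi\in\Isom(W',g|_{W'})$ preserves $f$ and the asymptotic cone structure, so rescalings of $F\circ\psi\circ F^{-1}$ converge to an isometry of $(\Cc^\Sigma\setminus\{\mathcal O\},\hat g)$ fixing the radial coordinate; such an isometry is necessarily of the form $\Id\times\gamma$ for some $\gamma\in G$, and the rigidity from the lifting step identifies $\psi$ with $F^{-1}\circ(\Id\times\gamma)\circ F$. The principal obstacle I expect is precisely this rigidity statement in the lifting step: extracting from \cite{KotschwarWangConical} not only the existence of $\Psi_\gamma$ but its unique determination by $\gamma$, with enough continuous dependence to produce a single common end $W$ for the entire group $G$ at once rather than one $\gamma$ at a time, will require a careful refinement of the backward uniqueness machinery used there. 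The straightening step, while less delicate, also demands care to ensure that the radial direction $\del f$ on the shrinker matches the cone radial direction closely enough for the gradient flow identification to be globally well defined on the end.
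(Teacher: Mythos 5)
Your outline shares the paper's two main mechanisms --- transferring cone isometries to the shrinker via the uniqueness theorem of \cite{KotschwarWangConical}, and transferring shrinker isometries back to the cone via the gradient flow of $f$ --- but it has a genuine gap at exactly the point you flag as the ``principal obstacle,'' and the paper's resolution of that obstacle is an idea your proposal is missing. The backward uniqueness argument of \cite{KotschwarWangConical} produces, for each $\gamma$, an identity $\phi_\gamma^*g = g$ only on some end $\Cc^{\Sigma}_{r_1(\gamma)}$, where $r_1(\gamma)$ is obtained non-effectively and may depend on $\gamma$; trying to extract uniformity and continuous dependence in $\gamma$ from that machinery is precisely what the authors say they do \emph{not} do. Instead, they prove (Theorem \ref{thm:analyticstructure}, via the local Bernstein estimates of \cite{KotschwarAnalyticity} and \cite{KotschwarTimeAnalyticity}) that all time-slices of the interpolating Ricci flow --- in particular the shrinker metric $\bar g$ and the cone metric $\hat g$ --- are real-analytic relative to a \emph{common} real-analytic structure on the fixed end $\Cc^{\Sigma}_{r_0}$. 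Unique continuation of the real-analytic tensor $\phi^*\bar g-\bar g$ then upgrades the identity from $\Cc^{\Sigma}_{r_1}$ to all of $\Cc^{\Sigma}_{r_0}$ (Proposition \ref{prop:isomext}), which simultaneously removes any dependence on $\gamma$ and yields a single end $W$. Without this (or an effective version of the backward uniqueness), your lifting step does not close: compactness of $\Isom(\Sigma,g_\Sigma)$ alone does not give a common end, since you have no continuity of $\gamma\mapsto r_1(\gamma)$.

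A second, lesser point: your straightening step (equivariant tubular neighborhoods around level sets of $f$, then flowing along $\nabla f/|\nabla f|^2$) is both harder than needed and not obviously sufficient to produce the exact conjugation $\gamma\mapsto F^{-1}\circ(\Id\times\gamma)\circ F$ demanded by the statement --- an equivariant diffeomorphism $L_{c_0}\to\Sigma$ only conjugates the action on $L_{c_0}$ to \emph{some} smooth action on $\Sigma$, and identifying that action with the isometric one requires more. The paper avoids this entirely: the identification $F$ of Proposition \ref{prop:brf} is already the right one, and Theorem \ref{thm:isomp} shows that $\Isom(\Cc^{\Sigma}_{r_0},\hat g)$ and $\Isom(\Cc^{\Sigma}_{r_0},\bar g)$ are literally \emph{equal as sets of diffeomorphisms}. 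The forward inclusion (your surjectivity step) is also made precise there in a way your sketch leaves open: one first shows via the soliton identities that a non-flat $g(\tau)$-isometry must preserve $\nabla f$ exactly (Lemma \ref{lem:gradf}, which needs the flat/non-flat dichotomy and the integral-curve argument, not just ``$f$ is an intrinsic invariant''), then uses ODE uniqueness for \eqref{eq:phisys} to see that the isometry commutes with $\Phi_\tau$, hence preserves every $g(\tau)$ and, by continuity at $\tau=0$, preserves $\hat g$. Your ``rescalings converge to an isometry of the cone'' is the right intuition but needs this commutation with the flow, together with the domain bookkeeping of Proposition \ref{prop:origisom}, to become a proof.
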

Here, the conclusion does not require that $(M, g)$ be either complete or connected at infinity. However, when $(M, g)$ is complete, it is reasonable to ask
whether the isometries on $W$ extend to isometries on $M$. The answer is yes at least when the fundamental group of $V$ surjects onto that of $M$.
This is a straightforward variation on the classical continuation argument
for local isometries on simply-connected real-analytic manifolds; see Theorem \ref{thm:globalext}.

\begin{theorem}\label{thm:isomg}
 Suppose that $(M, g, f)$ is complete and asymptotic to the cone $\Cc^{\Sigma}$ along the end $V\subset (M, g)$. If the homomorphism 
 $\pi_1(V, x_0) \to \pi_1(M, x_0)$ induced by inclusion
 is surjective for some $x_0\in V$, then $\Isom(\Sigma, g_{\Sigma})$ embeds into $\Isom(M, g)$.
\end{theorem}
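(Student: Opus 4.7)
My plan is to apply Theorem \ref{thm:isom} to obtain an isomorphism $\Psi\colon \Isom(\Sigma, g_\Sigma) \to \Isom(W, g|_W)$ for some end $W \subset V$, and then to extend each isometry $\phi = \Psi(\gamma)$ on $W$ to a global isometry $\tilde{\phi}$ of $(M, g)$ by analytic continuation, invoking Theorem \ref{thm:globalext}. The hypotheses for the continuation step are in place: the soliton equation $\Rc + \nabla^2 f = \tfrac{1}{2}g$ becomes an elliptic system with real-analytic coefficients in harmonic coordinates, so $g$ is real-analytic on $M$, and $(M, g)$ is geodesically complete by assumption.

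The classical monodromy principle asserts that the analytic continuation of a local isometry initially defined on an open set $U\subset M$ yields a well-defined global isometry provided every loop in $M$ based at some $x_0\in U$ is homotopic (rel $x_0$) to a loop in $U$; continuation around a loop that already lies in $U$ reproduces the original germ, since $\phi$ is a genuine isometry there. Taking $U = W$, the extension problem thus reduces to verifying that the inclusion induces a surjection $\pi_1(W, x_0) \twoheadrightarrow \pi_1(M, x_0)$. The hypothesis furnishes the surjectivity $\pi_1(V, x_0) \twoheadrightarrow \pi_1(M, x_0)$, so it remains to show $\pi_1(W) \twoheadrightarrow \pi_1(V)$. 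This follows because $W$ and $V$ are both ends diffeomorphic to truncated cones over $\Sigma$---by Theorem \ref{thm:isom} and the definition of \emph{asymptotic to $\Cc^\Sigma$}, respectively---and the inclusion $W\hookrightarrow V$ represents the natural inclusion of a cone end into a larger cone end, which is a homotopy equivalence.

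The step requiring the most care is the promotion of the extension $\tilde{\phi}$ from a local to a genuine global isometry and the verification that the assignment $\gamma\mapsto \tilde{\phi}$ is an injective group homomorphism. Applying the extension procedure to $\gamma^{-1}$ produces $\widetilde{\phi^{-1}}$, and the uniqueness of analytic continuation forces $\widetilde{\phi^{-1}}\circ\tilde{\phi} = \Id_M$ on the connected component of $M$ containing $W$, since the composition equals the identity on $W$. The group law $\widetilde{\phi_1\phi_2} = \tilde{\phi}_1\tilde{\phi}_2$ follows from the same uniqueness principle, and injectivity is immediate: if $\tilde{\phi} = \Id_M$, then $\phi = \Psi(\gamma) = \Id_W$, so $\gamma = \Id_\Sigma$ because $\Psi$ is an isomorphism. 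Hence $\gamma \mapsto \tilde{\phi}$ embeds $\Isom(\Sigma, g_\Sigma)$ into $\Isom(M, g)$, as claimed.
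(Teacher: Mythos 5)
Your argument is correct and follows essentially the same route as the paper: apply Theorem \ref{thm:isom} to get $\Isom(\Sigma,g_\Sigma)\cong\Isom(W,g|_W)$, note that $W\hookrightarrow V$ induces an isomorphism on $\pi_1$ (both being diffeomorphic to truncated cones) so that $\pi_1(W)$ surjects onto $\pi_1(M)$, extend via Theorem \ref{thm:globalext}, and verify injectivity and the homomorphism property by uniqueness of the extension. The only cosmetic difference is that you justify the real-analyticity of $(M,g)$ via harmonic coordinates rather than citing the paper's earlier discussion, which is equally valid.
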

The technical hypothesis on $V$ is always met when $(M, g)$ is K\"ahler: indeed, a complete noncompact K\"ahler shrinker is connected at infinity \cite{MunteanuWangKaehler},
so this is true of the lift of $(M, g, f)$ to the universal cover $\tilde{M}$ of $M$ (which is also a complete K\"ahler shrinker). Since $\pi_1(M)$ is necessarily finite \cite{Wylie}, the preimage of $V$ in the universal cover $\tilde{M}$ must be connected. 
See Section 4 of \cite{KotschwarKaehler} for details. In fact, by the main result of \cite{KotschwarKaehler},
it is actually only necessary to assume that the cone is K\"ahler.
\begin{corollary} Suppose $(M, g, f)$ is complete and asymptotic to $\Cc^{\Sigma}$ along some end. If $(\Cc_0^{\Sigma}, \hat{g})$ is K\"ahler, so is $(M, g)$ and 
$\Isom(\Sigma, g_{\Sigma})$ embeds in $\Isom(M, g)$. Moreover, $(M, g, f)$ is the unique complete shrinker asymptotic to $\Cc^{\Sigma}$.
\end{corollary}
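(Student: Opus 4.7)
The plan is to assemble the corollary from Theorem~\ref{thm:isomg} together with the main results of \cite{KotschwarKaehler}, \cite{KotschwarWangConical}, \cite{MunteanuWangKaehler}, and \cite{Wylie}. First, I would invoke the main theorem of \cite{KotschwarKaehler}, which asserts that any complete asymptotically conical shrinker whose asymptotic cone is K\"ahler is itself K\"ahler; this immediately gives that $(M,g)$ is K\"ahler.

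To deduce the embedding $\Isom(\Sigma, g_{\Sigma}) \hookrightarrow \Isom(M, g)$ from Theorem~\ref{thm:isomg}, I would verify the topological hypothesis on $V$. By \cite{MunteanuWangKaehler}, every complete noncompact K\"ahler shrinker is connected at infinity, and by \cite{Wylie} the fundamental group of any complete shrinker is finite. Applying these results to the lift of $(M,g,f)$ to the universal cover $\tilde M$ (again a complete K\"ahler shrinker), one concludes as in Section~4 of \cite{KotschwarKaehler} that the preimage $\tilde V \subset \tilde M$ of $V$ is connected. A standard path-lifting argument then yields the surjectivity of $\pi_1(V,x_0)\to \pi_1(M,x_0)$: any loop at $x_0\in V$ lifts to a path in $\tilde M$ between two points of $\tilde V$, which can be replaced by a path inside the connected set $\tilde V$, whose projection is a loop in $V$ homotopic to the original. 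Theorem~\ref{thm:isomg} then applies.

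For the uniqueness assertion, let $(M',g',f')$ be any complete shrinker asymptotic to $\Cc^\Sigma$ along some end $V'$. The same reasoning shows that $(M',g')$ is K\"ahler and that $\pi_1(V', x_0')\to\pi_1(M', x_0')$ is surjective. By the uniqueness theorem of \cite{KotschwarWangConical}, after passing to smaller ends $W\subset V$ and $W'\subset V'$ there is a diffeomorphism $F\colon W\to W'$ realizing an isometry of $g|_W$ with $g'|_{W'}$ and matching $f$ with $f'$ (the potentials being determined by their metrics up to additive constants that are fixed by the shrinker equation and the common asymptotic identification). The main obstacle will be extending this near-infinity isometry to a global one. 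For this I would employ the real-analytic continuation argument underlying Theorem~\ref{thm:globalext}: since the shrinker structures are real-analytic in harmonic coordinates, $F$ can be continued along any curve in $M$, and the $\pi_1$-surjectivity just established forces the monodromy of this continuation to vanish. The resulting global isometry of $(M,g,f)$ with $(M',g',f')$ completes the proof.
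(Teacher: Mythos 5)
Your proposal is correct and follows essentially the same route the paper intends: Kählerity of $(M,g)$ from \cite{KotschwarKaehler}, the $\pi_1$-surjectivity of $V\hookrightarrow M$ via connectedness at infinity of the universal cover (\cite{MunteanuWangKaehler}, \cite{Wylie}), Theorem \ref{thm:isomg} for the embedding of isometry groups, and the end-uniqueness theorem of \cite{KotschwarWangConical} combined with the analytic continuation argument of Theorem \ref{thm:globalext} (applied in both directions, since both shrinkers satisfy the $\pi_1$ hypothesis) for global uniqueness. The only inessential digression is the remark about matching the potentials $f$ and $f'$, which is not needed once the metric isometry is extended.
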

We only use the K\"ahler property here to conclude that the universal cover of the shrinker is connected at infinity. It is an interesting question
whether every complete simply-connected shrinker with more than one end must split as a product.

\begin{acknowledgement*}  We thank Ronan Conlon for his interest and for useful discussions regarding the application of \cite{KotschwarWangConical}
to the isometries of the end.
\end{acknowledgement*}

\section{The self-similar solution on an asymptotically conical end}
We will assume below that our shrinking solitons $(M, g, f)$ are normalized to satisfy 
\begin{equation}\label{eq:grs}
 \Rc(g) + \nabla\nabla f = \frac{g}{2}, \quad R + |\nabla f|^2 = f.
\end{equation}
When $(M, g, f)$ is asymptotic to a cone $\Cc^{\Sigma}$ along an end $V\subset (M, g)$, it is shown in Section 2 of \cite{KotschwarWangConical} that
there is an end $W\subset V$ and a solution $g(t)$ to the Ricci flow on $W$ which interpolates between $g$ at $t= -1$ and (an isometric copy of)
$\hat{g}$ at $t=0$. This solution is smooth on $W\times[-1, 0]$, though only self-similar for $-1 \leq t < 0$.
It will be convenient in what follows to work in terms of the parameter $\tau = -t$ and regard $g= g(\tau)$ as a solution to the \emph{backward Ricci flow}
\begin{equation}\label{eq:brf}
 \pd{}{\tau} g = 2 \Rc(g).
\end{equation}
This solution transforms Theorem \ref{thm:isom} into a question of the preservation of isometries along the flow \eqref{eq:brf}.

\begin{proposition}[Proposition 2.1, \cite{KotschwarWangConical}]
\label{prop:brf}
Suppose the shrinker $(M, \tilde{g}, \tilde{f})$ is asymptotic to $\Cc^{\Sigma}$ along the end $V\subset (M, \tilde{g})$.
Then there exists $r_0 > 0$ and a diffeomorphism $F: \Cc_{r_0}\to W$ onto an end $W\subset V$
such that $\bar{g} = F^*\tilde{g}$, $\bar{f} = F^*\tilde{f}$ satisfy the following properties.
\begin{enumerate}

\item[(1)] The solution $\Phi$ to 
 \begin{equation}
 \label{eq:phisys}
  \pd{\Phi}{\tau} = -\frac{1}{\tau}\delb \bar{f} \circ \Phi, \quad \Phi_1 = \operatorname{Id},
 \end{equation}
is well-defined on $\Cc_{r_0}^{\Sigma}\times (0, 1]$,
and the maps $\Phi_{\tau}:\Cc_{r_0}^{\Sigma}\to \Cc_{r_0}^{\Sigma}$ are each injective local diffeomorphisms for $\tau \in (0, 1]$.

\item[(2)] The family of metrics $g(\tau) = \tau \Phi_{\tau}^*\bar{g}$ is a smooth solution to \eqref{eq:brf} on $\Cc_{r_0}^{\Sigma}\times (0, 1]$
and converges smoothly to $\hat{g}$ on $\overline{\Cc_{a}^{\Sigma}}$ for all $a > r_0$ as $\tau \to 0$. Moreover, there is a constant $K_0$ such that
\begin{align}
\label{eq:curvdecay}
\sup_{\Cc_{r_0}^{\Sigma}\times [0,1]} \left(r^{m+2}+1\right)|\nabla^{(m)}\Rm(g(\tau))| & \leq K_0.
\end{align}
Here $|\cdot| = |\cdot|_{g(\tau)}$ and $\nabla = \nabla_{g(\tau)}$ denote the norm and the Levi-Civita connection associated to the metric $g = g(\tau)$.

\item[(3)] If $f$ is the function on $\Cc_{r_0}^{\Sigma}\times (0,1]$ defined by 
$f(\tau)=\Phi_\tau^\ast \bar{f}$, then $\tau f$ converges smoothly as $\tau \to 0$
to $r^2/4$ on  $\overline{\Cc^{\Sigma}_{a}}$ for all $a > r_0$, and satisfies
\begin{align}
\label{eq:fid0}
r^2-\frac{N_0}{r^{2}} \le 4 \tau f(r, \sigma, \tau) \le r^2 + \frac{N_0}{r^{2}}, \quad \tau \nabla f = \frac{r}{2}\pd{}{r},
\end{align}
on $\Cc^{\Sigma}_{r_0}\times (0, 1]$ for some constant $N_0 > 0$.
\item[(4)] Together,  $g= g(\tau)$ and $f = f(\tau)$ satisfy 
\begin{equation}
 \label{eq:grstau}
 \Rc(g) + \nabla\nabla f = \frac{g}{2\tau}, \quad R + |\nabla f|^2 = \frac{f}{\tau}
\end{equation}
on $\Cc_{r_0}^{\Sigma}\times (0, 1]$.
\end{enumerate}
Here $r$ denotes the radial distance $r(x) = d(\mathcal{O}, x)$ on $\Cc^{\Sigma}$.
\end{proposition}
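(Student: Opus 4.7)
The plan is to choose a diffeomorphism $F:\Cc^\Sigma_{r_0}\to W$ so that the identity $\delb\bar f=(r/2)\partial_r$ holds \emph{exactly} in the cone coordinates. In this adapted gauge the defining equation \eqref{eq:phisys} becomes the scalar ODE $\partial_\tau r=-r/(2\tau)$ along radial lines, with the explicit solution $\Phi_\tau=\rho_{1/\sqrt\tau}$, and hence $g(\tau)=\tau\Phi_\tau^*\bar g=\lambda^{-2}\rho_\lambda^*\bar g$ with $\lambda=1/\sqrt\tau$ is literally the object whose convergence to $\hat g$ is the asymptotic conical hypothesis.

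\emph{Adapted gauge.} For $r_0$ sufficiently large, the soliton identity $R+|\nabla\tilde f|^2=\tilde f$, the conical decay $|\Rm(\tilde g)|=O(r^{-2})$, and the convergence $\lambda^{-2}\rho_\lambda^*\tilde g\to\hat g$ ensure $|\nabla\tilde f|\sim r/2$ and $\nabla\tilde f\ne 0$ on $\{\tilde f\ge r_0^2/4\}\cap V$. Pick a reference cross-section diffeomorphism $F_0:\Sigma\to\{\tilde f=r_0^2/4\}\cap V$ (carefully tuned within the conical identification of that level set with $\Sigma$; see the obstacle below) and extend to $F$ by solving, for each $\sigma\in\Sigma$, the ODE in $r$
\[\frac{\partial F}{\partial r}(r,\sigma)=\frac{2}{r}\nabla\tilde f\bigl(F(r,\sigma)\bigr).\]
The ODE has a global solution on $[r_0,\infty)\times\Sigma$ by nonvanishing and controlled growth of $\nabla\tilde f$, and yields a diffeomorphism onto an end $W\subset V$. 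By construction $F_*\partial_r=(2/r)\nabla\tilde f$, so pulling back via the isometry $F:(\Cc^\Sigma_{r_0},\bar g)\to(W,\tilde g)$ gives the vector-field identity $\delb\bar f=(r/2)\partial_r$ on $\Cc^\Sigma_{r_0}$; the tuned choice of $F_0$ enforces in addition $4\bar f=r^2+O(r^{-2})$.

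\emph{Self-similar solution.} With $\delb\bar f=(r/2)\partial_r$, \eqref{eq:phisys} reduces to $\partial_\tau r=-r/(2\tau)$, $\partial_\tau\sigma=0$, whose solution is $\Phi_\tau(r,\sigma)=(r/\sqrt\tau,\sigma)=\rho_{1/\sqrt\tau}(r,\sigma)$, an injective diffeomorphism $\Cc^\Sigma_{r_0}\to\Cc^\Sigma_{r_0/\sqrt\tau}\subset\Cc^\Sigma_{r_0}$ for every $\tau\in(0,1]$, settling (1). With $\lambda=1/\sqrt\tau$,
\[g(\tau)=\tau\Phi_\tau^*\bar g=\lambda^{-2}\rho_\lambda^*\bar g\xrightarrow{\tau\to 0}\hat g\]
in $C^2_{\mathrm{loc}}(\Cc^\Sigma_0,\hat g)$ by the asymptotic conical hypothesis, which upgrades to smooth convergence on each $\overline{\Cc^\Sigma_a}$ ($a>r_0$) via Shi-type interior estimates applied to the resulting backward Ricci flow. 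The standard soliton-to-flow identity
\[\partial_\tau g(\tau)=\Phi_\tau^*(\bar g-\Lie_{\delb\bar f}\bar g)=\Phi_\tau^*(\bar g-2\delb\delb\bar f)=2\Phi_\tau^*\Rc(\bar g)=2\Rc(g(\tau))\]
(using the shrinker equation $\Rc(\bar g)+\delb\delb\bar f=\bar g/2$ and scale invariance of $\Rc$) confirms (2), and pulling the shrinker identities through $\Phi_\tau$ yields (4). The identity $\tau\nabla_{g(\tau)}f(\tau)=\Phi_\tau^*(\delb\bar f)=\rho_{1/\sqrt\tau}^*((r/2)\partial_r)=(r/2)\partial_r$ is the gradient formula in (3), and $4\tau f(r,\sigma,\tau)=4\tau\bar f(r/\sqrt\tau,\sigma)=r^2+O(\tau^2/r^2)$ for $\tau\in(0,1]$ gives the $f$-bound.

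\emph{Curvature bounds and main obstacle.} By scale invariance of $\Rm$, $|\Rm(g(\tau))|_{g(\tau)}(x)=\tau^{-1}|\Rm(\bar g)|_{\bar g}(\Phi_\tau x)$; combined with $|\Rm(\bar g)|\le Kr^{-2}$ (from $C^2$-convergence $\bar g\to\hat g$ and $|\Rm(\hat g)|=O(r^{-2})$) and $r(\Phi_\tau x)=r(x)/\sqrt\tau$, this gives $(r^2+1)|\Rm(g(\tau))|\le K_0$; the derivative bounds in \eqref{eq:curvdecay} then follow from Shi's interior estimates applied on parabolic balls of fixed extrinsic size. The principal technical difficulty lies in the construction of $F_0$: the naive choice $\tilde f\circ F_0=r_0^2/4$ produces, via integration of the radial ODE $\partial_r\bar f=(2/r)(\bar f-R)$, a leading-order angular discrepancy $-8\int_{r_0}^\infty R(F(s,\sigma))/s^3\,ds$ in $4\bar f/r^2$, which in general violates the $O(r^{-2})$ tolerance required for (3). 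This offset must be absorbed by a fixed-point adjustment of $F_0$ within the conical identification, which converges by the smallness (of order $r_0^{-4}$) of the discrepancy for $r_0$ large. Once the adapted gauge is in place, the remaining content of the proposition is essentially algebraic, with the singularity of \eqref{eq:phisys} at $\tau=0$ entirely absorbed by the explicit dilation solution.
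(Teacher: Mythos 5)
The paper does not actually reprove this proposition: everything except the identity $\tau\nabla f=\frac{r}{2}\pd{}{r}$ is quoted verbatim from Proposition 2.1 of \cite{KotschwarWangConical}, and that one identity is obtained by a short, gauge-independent argument --- $\tau\operatorname{grad}_{g(\tau)}f(\tau)=\Phi_\tau^*(\operatorname{grad}_{\bar g}\bar f)$ is independent of $\tau$ because $\Phi_\tau$ is a time-reparametrized flow of $\operatorname{grad}_{\bar g}\bar f$ and a vector field is invariant under its own flow, so its value can be read off from the $\tau\to 0$ limit $\operatorname{grad}_{\hat g}(r^2/4)=\frac{r}{2}\pd{}{r}$. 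Your route to that identity is different: you hard-wire $\delb\bar f=\frac{r}{2}\pd{}{r}$ into the construction of $F$ and then invoke dilation-invariance of $\frac{r}{2}\pd{}{r}$. That is legitimate, and your overall architecture (adapted gauge via the normalized gradient flow of $\tilde f$, the explicit dilation form of $\Phi_\tau$, scaling plus Shi for \eqref{eq:curvdecay}) is essentially the strategy of \cite{KotschwarWangConical} itself; your observation that the naive choice of $F_0$ leaves an angular offset in $4\bar f/r^2$ which must be absorbed by reparametrizing each flow line is a genuine feature of that construction, and your fix is the right one.

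There is, however, a real gap in your treatment of conclusion (2). You assert that $g(\tau)=\lambda^{-2}\rho_\lambda^*\bar g\to\hat g$ ``is literally the object whose convergence to $\hat g$ is the asymptotic conical hypothesis.'' It is not: the hypothesis supplies the convergence $\lambda^{-2}\rho_\lambda^*F_{\mathrm{old}}^*\tilde g\to\hat g$ for some given diffeomorphism $F_{\mathrm{old}}$, whereas $\bar g=F^*\tilde g$ for your newly constructed adapted gauge $F$. To transfer the convergence you must show that $\Theta_\lambda=\rho_{1/\lambda}\circ F_{\mathrm{old}}^{-1}\circ F\circ\rho_\lambda$ converges in $C^3_{\mathrm{loc}}$ to an isometry of the cone; equivalently, that the rescaled integral curves of $\frac{2}{r}\nabla\tilde f$, viewed in the original gauge, converge to radial lines together with enough derivatives. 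That in turn requires quantitative decay of $\nabla\tilde f-\frac{r}{2}\pd{}{r}$ and of $\nabla^{(k)}\Rm(\tilde g)$ established in the \emph{original} gauge, before the adapted one exists --- this is the bulk of Section 2 of \cite{KotschwarWangConical} and cannot be borrowed from the hypothesis. The pointwise bound $|\Rm(\bar g)|\le Kr^{-2}$ survives the gauge change cheaply (curvature norms are diffeomorphism-invariant and $r_{\mathrm{new}}\asymp r_{\mathrm{old}}\asymp 2\sqrt{\tilde f}$), but the smooth convergence $g(\tau)\to\hat g$ on $\overline{\Cc^{\Sigma}_a}$, which the rest of the paper relies on, genuinely needs the gauge comparison and is the missing step in your write-up.
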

\begin{proof}
The second identity in \eqref{eq:fid0} follows from the smooth convergence
of $g$ to $\hat{g}$ and of $\tau f$ to $r^2/4$ as $\tau\to 0$ and that $\tau \nabla f = \operatorname{grad}_{g(\tau)}f(\tau)$ is independent of $\tau$. Indeed,
\begin{equation*}
 \pd{}{\tau}\left( \tau \operatorname{grad}_{g(\tau)} f(\tau)\right) = \pdtau \Phi_{\tau}^{*}\left(\operatorname{grad}_{\bar{g}} \bar{f}\right) =
 -\frac{1}{\tau} \left.\pd{}{s}\right|_{s= -\ln\tau} \varphi_s^{*}\left(\operatorname{grad}_{\bar{g}} \bar{f}\right) = 0
\end{equation*}
on $\Cc_{r_0}^{\Sigma}\times (0, 1]$, where $\varphi_s:\Cc^{\Sigma}_{r_0}\to \Cc^{\Sigma}_{r_0}$ is the family of local diffeomorphisms generated by $\operatorname{grad}_{\bar{g}} \bar{f}$.  Here, we use $\Phi_{\tau}^*$ and $\varphi_s^*$ to denote $(\Phi_{\tau}^{-1})_{*}$ and $(\varphi_s^{-1})_{*}$, respectively.
The other assertions are part of Proposition 2.1 in \cite{KotschwarWangConical}.
\end{proof}

Proposition \ref{prop:brf} allows us to work with a soliton structure $(\Cc^{\Sigma}_{r_0}, \bar{g}, \bar{f})$ which flows directly to the cone under the 
Ricci flow. We will say that a shrinker $(\Cc^{\Sigma}_{r_0}, \bar{g}, \bar{f})$ satisfying properties (1)-(4) of Proposition \ref{prop:brf} is \emph{dynamically asymptotic} to $(\Cc^{\Sigma}_{r_0}, \hat{g})$.
The correspondence expressed in Theorem \ref{thm:isom} can be improved for shrinkers that have been normalized in this sense.

\begin{theorem}\label{thm:isomp} Suppose the shrinking soliton $(\Cc^{\Sigma}_{r_0}, \bar{g}, \bar{f})$ is dynamically
asymptotic to $(\Cc^{\Sigma}_{r_0}, \hat{g})$. Then $\Isom(\Cc^{\Sigma}_{r_0}, \hat{g}) = \Isom(\Cc^{\Sigma}_{r_0}, \bar{g})$.
 \end{theorem}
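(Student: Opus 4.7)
The plan is to transport isometries along the dynamically asymptotic family $g(\tau) = \tau \Phi_\tau^*\bar g$ furnished by Proposition~\ref{prop:brf}, a smooth solution of \eqref{eq:brf} on $\Cc_{r_0}^\Sigma\times[0,1]$ interpolating between $\bar g$ at $\tau=1$ and $\hat g$ at $\tau=0$. For any diffeomorphism $\gamma$ of $\Cc_{r_0}^\Sigma$, the pullback $\tilde g(\tau) := \gamma^* g(\tau)$ is another smooth solution of \eqref{eq:brf}, and the two inclusions of the theorem amount to propagating the equality $\tilde g \equiv g$ in opposite directions in $\tau$.

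For the inclusion $\Isom(\Cc_{r_0}^\Sigma, \bar g) \subseteq \Isom(\Cc_{r_0}^\Sigma, \hat g)$, fix $\gamma \in \Isom(\bar g)$. Applying $\gamma^*$ to the first soliton identity in \eqref{eq:grs} and using $\gamma^*\bar g = \bar g$ gives $\bar\nabla\bar\nabla(\bar f\circ\gamma - \bar f) = 0$. Combined with the auxiliary identity $\bar f = R(\bar g) + |\bar\nabla\bar f|^2$ and the absence of non-trivial parallel vector fields on an asymptotically conical end, this forces $\bar f\circ\gamma = \bar f$. Thus $\gamma$ preserves the vector field $\bar\nabla\bar f$ and, via \eqref{eq:phisys}, commutes with each $\Phi_\tau$, whence
\[
 \gamma^* g(\tau) \;=\; \tau\,\gamma^*\Phi_\tau^*\bar g \;=\; \tau\,\Phi_\tau^*\gamma^*\bar g \;=\; g(\tau).
\]
Taking $\tau \to 0$ in the $C^2_{\text{loc}}$ convergence of Proposition~\ref{prop:brf}(2) yields $\gamma^*\hat g = \hat g$.

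For the reverse inclusion, fix $\gamma \in \Isom(\hat g)$. As a vertex-fixing cone isometry, $\gamma$ preserves $r$ and hence restricts to a self-diffeomorphism of $\Cc_{r_0}^\Sigma$; consequently $\tilde g(\tau) = \gamma^* g(\tau)$ is a smooth solution of \eqref{eq:brf} on $\Cc_{r_0}^\Sigma\times[0,1]$ which inherits the curvature decay \eqref{eq:curvdecay} and satisfies $\tilde g(0) = \gamma^*\hat g = \hat g = g(0)$. The remaining task is to show that $\tilde g \equiv g$ throughout $[0,1]$; evaluation at $\tau = 1$ would then give $\gamma^*\bar g = \bar g$. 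The main obstacle here is that this is a backward-uniqueness statement for \eqref{eq:brf} on the non-complete domain $\Cc_{r_0}^\Sigma$, and Kotschwar's standard backward-uniqueness theorem for Ricci flow on complete manifolds does not apply verbatim. The resolution, where most of the analytic effort will go, is to adapt the weighted Carleman-estimate machinery developed in \cite{KotschwarWangConical} for Ricci flow on asymptotically conical ends: the decay supplied by \eqref{eq:curvdecay} and \eqref{eq:fid0}, invariant under the $r$-preserving map $\gamma$, furnishes the exponential weights needed to control boundary terms at $\{r = r_0\}$ and propagate the initial-time agreement $\tilde g(0) = g(0)$ forward in $\tau$ up to $\tau = 1$.
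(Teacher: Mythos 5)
Your overall architecture (transport isometries along the interpolating flow $g(\tau)=\tau\Phi_\tau^*\bar g$; go from $\tau=1$ to $\tau=0$ by commuting with $\Phi_\tau$, and from $\tau=0$ to $\tau=1$ by backward uniqueness) matches the paper's, but both directions contain genuine gaps. In the forward direction, the step where you invoke ``the absence of non-trivial parallel vector fields on an asymptotically conical end'' is not a known fact and is false as stated: a flat end (e.g.\ an end of the Gaussian shrinker over a spherical cross-section) carries parallel fields, and $\bar g$ flat is a real possibility here. What is actually true, and is the key lemma of this direction (Lemma \ref{lem:gradf} in the paper), is that a nontrivial parallel gradient field $\nabla h$ forces $\Rm\equiv 0$; proving it requires real work — one shows $|\Rm|^2$ is constant along integral curves of $\pm\nabla h$, uses the soliton identity to show these curves escape to spatial infinity where \eqref{eq:curvdecay} forces the curvature to vanish, and then uses analyticity to propagate $\Rm\equiv0$ to all of $\Cc^{\Sigma}_{r_0}$. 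The flat case must then be disposed of separately (there the flow \eqref{eq:brf} is static, so $\bar g=\hat g$ trivially). Your proposal asserts the conclusion of this lemma without proof and omits the flat case.

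The reverse direction has the more serious problem. You propose to prove $\gamma^*g(\tau)\equiv g(\tau)$ on all of $\Cc^{\Sigma}_{r_0}\times[0,1]$ by adapting the Carleman machinery of \cite{KotschwarWangConical}, with the exponential weights ``controlling boundary terms at $\{r=r_0\}$.'' Those weights are designed to handle the noncompactness at spatial \emph{infinity}; they give no control at the inner boundary, and backward uniqueness up to a boundary without boundary data is not attainable this way. Indeed, Theorem 2.2 of \cite{KotschwarWangConical} only yields $\gamma^*g\equiv g$ on a smaller end $\Cc^{\Sigma}_{a_0}$ and for a short time interval $[0,T_0]$, with $T_0$ produced non-effectively. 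The paper's resolution (Propositions \ref{prop:isomp0} and \ref{prop:isomext}) is to accept this weaker conclusion, use the self-similarity to convert it into $\gamma^*\bar g=\bar g$ on $\Cc^{\Sigma}_{a_0/\sqrt{T_0}}$, and then extend this identity to all of $\Cc^{\Sigma}_{r_0}$ by real-analytic unique continuation — which in turn requires Theorem \ref{thm:analyticstructure}, the fact that all the time-slices $g(\tau)$ (hence $\bar g$, $\hat g$, and the local isometry $\gamma$ itself) are real-analytic relative to a single real-analytic structure. This analyticity ingredient is entirely absent from your proposal, and without it (or an effective, boundary-reaching backward uniqueness theorem that does not currently exist in the literature) the argument does not close.
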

Here, the identity asserted between $\Isom(\Cc^{\Sigma}_{r_0}, \hat{g})$ and  $\Isom(\Cc^{\Sigma}_{r_0}, \bar{g})$
is an equality of sets (that is, as subsets of $\operatorname{Diff}(\Cc_{r_0}^{\Sigma})$).

\begin{proof}[Proof of Theorem \ref{thm:isom}, assuming Theorem \ref{thm:isomp}]
Suppose $(M, g, f)$ is asymptotic to $\Cc^{\Sigma}$
along the end $V\subset (M, g)$. By Proposition \ref{prop:brf}, there is $r_0 > 0$, an end $W\subset V$, and a diffeomorphism $F: \Cc^{\Sigma}_{r_0}\to W$ such that, with
$\bar{g} = F^*g$, $\bar{f} = F^*f$, the soliton structure $(\Cc^{\Sigma}_{r_0}, \bar{g}, \bar{f})$ is dynamically asymptotic to $\Cc^{\Sigma}$. 

By Theorem 2.2, then, $\phi\mapsto F \circ \phi\circ F^{-1}$ is an isomorphism
of $\Isom(\Cc^{\Sigma}_{r_0}, \hat{g})$ onto $\Isom(W, g|_W)$. However, an isometry $\phi: (\Cc^{\Sigma}_{r_0}, \hat{g})\to (\Cc^{\Sigma}_{r_0}, \hat{g})$ is
the restriction of a vertex-preserving isometry of $\Cc^{\Sigma}$ to $\Cc^{\Sigma}_{r_0}$, and so $\phi = \operatorname{Id}\times \gamma$ for some $\gamma\in \Isom(\Sigma, g_{\Sigma})$. 
\end{proof}

In particular, any Killing vector field on the cross-section  $(\Sigma, g_{\Sigma})$ corresponds to a Killing vector field of the soliton metric $g$ on $\Cc^{\Sigma}_{r_0}$.
In this paper, we are only interested in the isometries of the cone which are induced by those on the cross-section. These isometries are precisely those which
fix the vertex of the cone. If the cone $\Cc^{\Sigma}$ admits a isometry which does
not fix the vertex, then $\Cc^{\Sigma}$ is smooth and in fact isometric to Euclidean space. By \cite{KotschwarWangConical}, the only shrinker which is asymptotic to such a cone is itself (an end of) the Gaussian shrinker.

\section{The analytic structure associated to a solution to the Ricci flow}
We next wish to show that the end of an asymptotically conical shrinker and the cone to which it is dynamically asymptotic are real-analytic relative to a common real-analytic structure.
This is particularly convenient to address from within the parabolic framework established in the previous section. This framework realizes both the end of the soliton $(\Cc^{\Sigma}_{r_0}, \bar{g})$ and the end of the cone $(\Cc^{\Sigma}_{r_0}, \hat{g})$
as  
time-slices of the same smooth Ricci flow, and reduces the problem to a matter of sharpening the usual statement of instantaneous real-analyticity
for that equation.

Most of this section concerns general smooth solutions to the Ricci flow and is independent of the rest of the paper. We will temporarily use $g(t)$ to denote an
arbitrary smooth solution to the (forward) Ricci flow on 
$M\times [0, T]$. It is a classical theorem of Bando \cite{Bando} that, when $M$ is compact, for each $0 < t \leq T$, $(M, g(t))$ is real-analytic relative to the atlas of normal coordinates associated to $g(t)$. His proof, based on Berstein-type estimates on the covariant derivatives of the curvature tensor, carries over essentially verbatim to the case that $(M, g(t))$ is complete and $\sup_{M\times[0, T]} |\Rm|(x, t) < \infty$.  The estimates in \cite{Bando} were later localized by the first author in \cite{KotschwarAnalyticity} to show that 
for any
 smooth solution to the Ricci flow, $(M, g(t))$ is real-analytic for each $t > 0$. 
 
 Here we point out that these same estimates imply that the metrics $g(t)$ are in fact real-analytic relative to a fixed atlas, viz., the real-analytic structure (i.e., maximal real-analytic atlas) induced by the atlas of normal coordinates taken relative to any one of the $g(t)$. 
 Note that the solutions need not be complete, an aspect important for our application below. For complete solutions of bounded curvature,
 a stronger space-time analyticity result is proven in \cite{KotschwarTimeAnalyticity}. See also \cite{Shao}.  
 
 \begin{theorem}\label{thm:analyticstructure} Suppose $g(t)$ is a smooth solution to the Ricci flow on $M\times [0, T]$. Then there exists a unique real-analytic structure $\Ac$
 relative to which $(M, g(t))$ is analytic for all $t\in (0, T]$. This structure is generated by the atlas of $g(t)$-normal coordinate charts for any $t > 0$. 
 \end{theorem}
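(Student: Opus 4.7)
The plan is to fix a time $t_0 \in (0, T]$ and let $\Ac$ be the maximal real-analytic atlas generated by the $g(t_0)$-normal coordinate charts. The instantaneous real-analyticity result of \cite{KotschwarAnalyticity} applied at the single time $t_0$ ensures that $g(t_0)$ is real-analytic in its own normal coordinates; in particular, the transitions between any two such charts are real-analytic, so $\Ac$ is a bona fide real-analytic structure on $M$. Uniqueness is then formal: any real-analytic structure $\mathcal{B}$ in which $g(t_0)$ is analytic must contain every $g(t_0)$-normal coordinate chart as a $\mathcal{B}$-chart (the geodesic ODE for $g(t_0)$ has real-analytic coefficients in $\mathcal{B}$-coordinates, so its solutions, and hence the $g(t_0)$-exponential map, are real-analytic), and since these charts cover $M$, the standard transitivity argument for analytic compatibility forces $\mathcal{B} = \Ac$.

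The essential content is showing that $g(t_1)$ is real-analytic relative to $\Ac$ for every $t_1 \in (0, T]$. Fix $p \in M$ and a $g(t_0)$-normal coordinate chart $(x^i)$ on a geodesic ball around $p$; the claim reduces to producing factorial estimates
\begin{equation*}
 |\partial^\alpha g_{ij}(t_1)|(x) \leq C N^{|\alpha|+1}|\alpha|!
\end{equation*}
on a possibly smaller neighborhood of $p$. The principal input is the localized Bernstein-type estimate of \cite{KotschwarAnalyticity}: for each compact $K \subset M$ and $\tau \in (0, T)$ there exist constants $C$ and $N$ (depending on $K$ and $\tau$) such that
\begin{equation*}
 |\nabla^{(m)}_{g(t)}\Rm(g(t))|_{g(t)} \leq C N^{m+1} m!
\end{equation*}
on $K \times [\tau, T]$. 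Uniformity of the constants in $t$ is already implicit in that proof and can be extracted without substantive change.

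The main obstacle is translating these covariant derivative bounds (with respect to the $t$-dependent connection $\nabla_{g(t)}$) into coordinate derivative bounds for $g(t)$ in the fixed $g(t_0)$-normal coordinate system, with constants uniform in $t \in [\tau, T]$. My strategy is to first establish, by induction on $m$, factorial bounds on $|\nabla^{(m)}_{g(t_0)} g(t)|_{g(t_0)}$ on compact subsets of the coordinate ball. The inductive step uses the uniform equivalence $c^{-1} g(t_0) \leq g(t) \leq c g(t_0)$ (a consequence of the curvature bound), the Bernstein estimate above to control $\nabla^{(m)}_{g(t)} \Rc(g(t))$, and the standard expression of the difference tensor $\nabla_{g(t)} - \nabla_{g(t_0)}$ and its iterated $\nabla_{g(t_0)}$-derivatives in terms of $\nabla_{g(t_0)}$-derivatives of $g(t)$; the Ricci flow equation $\partial_t g = -2\Rc(g)$ closes the recursion in time. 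Once these covariant bounds are in hand, the real-analyticity of $g(t_0)$ in $(x^i)$ — which provides factorial control on the $g(t_0)$-Christoffels in these coordinates — allows one to pass from covariant to coordinate derivatives and conclude the required factorial estimate on $|\partial^\alpha g_{ij}(t_1)|$. The final assertion that $\Ac$ is equally generated by the $g(t)$-normal charts for any $t > 0$ then follows by applying the uniqueness argument with $t$ in place of $t_0$.
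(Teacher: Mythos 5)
Your proposal follows essentially the same route as the paper: fix a reference time $t_0$, let $\Ac$ be the structure generated by the $g(t_0)$-normal charts (in which $g(t_0)$ is analytic by \cite{KotschwarAnalyticity}), dispose of uniqueness formally, and then convert the covariant Bernstein estimates $|\nabla^{(k)}\Rm|\leq C L^{k}(k-2)!$ into coordinate factorial bounds on $g(t)$ by passing through the difference tensors $\nabla_{g(t)}-\nabla_{g(t_0)}$ and $\partial-\nabla_{g(t_0)}$. The one step where you claim more than the available estimates deliver is the assertion that your induction on $|\nabla^{(m)}_{g(t_0)}g(t)|$ ``closes the recursion in time'' uniformly for all $t\in[\tau,T]$. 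The propagation of factorial bounds through the evolution of the difference tensor (Proposition 27 of \cite{KotschwarTimeAnalyticity}, which is what the paper invokes) is only established for $|t-t_0|<\epsilon$: the quadratic and higher product terms in the recursion are exactly what force the restriction to a short time interval, and a global-in-time version is not in the cited literature and would need its own proof. The paper avoids the issue entirely by proving only the weaker claim that the structure $\Ac_t$ generated by the $g(t)$-normal charts is \emph{locally constant} in $t$, and then using connectedness of $(0,T]$ to conclude that all the $\Ac_t$ coincide. Your argument is repaired by inserting this local-constancy-plus-connectedness step (at which point it becomes the paper's proof); as written, the uniform-in-$t$ claim is an unsubstantiated strengthening.
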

 
 In the next section, we will use the following application of the above theorem to the Ricci flow associated to a shrinker on an asymptotically conical end.
 \begin{corollary}\label{cor:analyticcone}
  Suppose $(\Cc^{\Sigma}_{r_0}, g, f)$ is dynamically asymptotic to $(\Cc^{\Sigma}_0, \hat{g})$. Then $(\Cc^{\Sigma}_{r_0}, g)$ and $(\Cc^{\Sigma}_{r_0}, \hat{g})$ are real-analytic relative to a common real-analytic structure. In particular, the cross-section $(\Sigma, g_{\Sigma})$ of the asymptotic cone of a shrinker is real-analytic.
 \end{corollary}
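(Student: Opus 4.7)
The plan is to feed the smooth Ricci flow from Proposition \ref{prop:brf} into Theorem \ref{thm:analyticstructure}. A direct application, viewing $g(\tau)$ on $\Cc_{r_0}^\Sigma \times [0,1]$ as a forward Ricci flow $\tilde g(t) = g(1-t)$, yields a real-analytic structure relative to which $g(\tau)$ is analytic only for $\tau \in [0,1)$; this captures $\hat g = g(0)$ but misses $\bar g = g(1)$, which sits at the initial slice of the forward flow, where the conclusion of Theorem \ref{thm:analyticstructure} does not apply. The fix is to extend the self-similar flow past $\tau = 1$ using the soliton structure.

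Setting $\tau = 1$ in the identity $\tau\nabla f = \frac{r}{2}\pd{}{r}$ from Proposition \ref{prop:brf}(3) yields $\delb\bar f = \frac{r}{2}\pd{}{r}$, so the ODE \eqref{eq:phisys} integrates explicitly as $\Phi_\tau(r,\sigma) = (r/\sqrt{\tau}, \sigma)$ for all $\tau > 0$. For any $r_1 > r_0$, setting $T_{r_1} := (r_1/r_0)^2 > 1$, the map $\Phi_\tau$ sends $\Cc_{r_1}^\Sigma$ diffeomorphically into $\Cc_{r_0}^\Sigma$ for all $\tau \in (0, T_{r_1}]$, so $g(\tau) := \tau \Phi_\tau^* \bar g$ defines a smooth extension of the backward Ricci flow to $\Cc_{r_1}^\Sigma \times [0, T_{r_1}]$. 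Applying Theorem \ref{thm:analyticstructure} to the time-reversed forward flow $\tilde g(t) := g(T_{r_1} - t)$ then produces a unique real-analytic structure $\Ac_{r_1}$ on $\Cc_{r_1}^\Sigma$ relative to which $g(\tau)$ is analytic for all $\tau \in [0, T_{r_1})$; since $T_{r_1} > 1$, both $\bar g$ and $\hat g$ are analytic on $\Cc_{r_1}^\Sigma$ with respect to $\Ac_{r_1}$.

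To glue, observe that for $r_0 < r_1 < r_2$, both $\Ac_{r_2}$ and $\Ac_{r_1}|_{\Cc_{r_2}^\Sigma}$ are generated by the atlas of $g(\tau_0)$-normal coordinate charts for any fixed $\tau_0 \in (0, 1)$ and therefore coincide by the uniqueness in Theorem \ref{thm:analyticstructure}. The compatible family $\{\Ac_{r_1}\}_{r_1 > r_0}$ glues to a unique real-analytic structure $\Ac$ on $\Cc_{r_0}^\Sigma = \bigcup_{r_1 > r_0} \Cc_{r_1}^\Sigma$ relative to which both $\bar g$ and $\hat g$ are analytic. For the final assertion, the limit function $\hat f = \lim_{\tau \to 0}\tau f(\tau) = r^2/4$ from Proposition \ref{prop:brf}(3) is a smooth solution of $\hat\Delta \hat f = n/2$ (obtained by passing to the limit in the trace of \eqref{eq:grstau}); since this is a linear elliptic equation with coefficients analytic in $\Ac$, elliptic regularity makes $\hat f$, and hence $r = 2\sqrt{\hat f}$, an analytic function, so each level set $\{r = r_1\}$ is an analytic hypersurface diffeomorphic to $\Sigma$ on which the restriction $r_1^{-2}\hat g|_{\{r=r_1\}} = g_\Sigma$ is analytic. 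The main subtlety will be verifying the containment $\Phi_\tau(\Cc_{r_1}^\Sigma) \subset \Cc_{r_0}^\Sigma$ for $\tau \in (1, T_{r_1}]$, which the purely radial form of $\delb\bar f$ makes transparent.
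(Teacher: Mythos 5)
Your proposal is correct and takes essentially the same route as the paper: both proofs extend the self-similar flow slightly past $\tau=1$ on each subcone $\Cc^{\Sigma}_{s}$, $s>r_0$ (the paper deduces the extension to $\tau\in[0,1+\delta]$ from the compactness of $\partial\Cc^{\Sigma}_{s}$, you from the explicit radial form $\Phi_\tau(r,\sigma)=(r/\sqrt{\tau},\sigma)$), apply Theorem \ref{thm:analyticstructure} there, and then exhaust $\Cc^{\Sigma}_{r_0}$ using the uniqueness of the structure. The analyticity of the cross-section is likewise obtained in both arguments from the elliptic equation $\hat{\Delta}r^2=2n$ together with analytic elliptic regularity and the real-analytic implicit function theorem.
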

\begin{proof}[Proof of Corollary \ref{cor:analyticcone}]
    We apply the theorem to $\Cc^{\Sigma}_s$ for $s > r_0$. By the compactness of $\partial \Cc_s^{\Sigma}$, the restriction to $\Cc^{\Sigma}_s$ of the solution to \eqref{eq:brf} associated to $(\Cc^{\Sigma}_{r_0}, g, f)$
    is defined for $\tau \in [0, 1+\delta]$, that is, for $t\in [-\delta, 1]$ for some $\delta > 0$. This shows that $g$ and $\hat{g}$ are real-analytic relative 
    to a common real-analytic structure on $\Cc^{\Sigma}_s$ for all $s > r_0$, and hence on $\Cc^{\Sigma}_{r_0}$. Since the conical radial distance function $r$ satisfies the elliptic equation $\hat{\Delta} r^2 = 2n$, it is real-analytic on $\Cc^{\Sigma}_{r_0}$ relative to this structure as well. Using the implicit function
    theorem in the real-analytic category, it follows that $g_{\Sigma}$ is real-analytic.
\end{proof}
 Corollary \ref{cor:analyticcone} shows that there are many smooth compact manifolds $(\Sigma, g_{\Sigma})$ which cannot arise as the cross-section of the asymptotic cone of a shrinker. By contrast,
 there are, as yet, no known restrictions on the asymptotic cones of asymptotically conical expanding solitons. (See, e.g., \cite{DeruelleExpConical}, \cite{LottWilson}, \cite{LottZhang},
 \cite{SchulzeSimon}.) The real-analyticity of solitons was first proven in \cite{Ivey}.

\begin{proof}[Proof of Theorem \ref{thm:analyticstructure}]
 For the uniqueness of the structure $\Ac$, note first that, If $(U, \varphi)$ and $(V, \psi)$ are any overlapping smooth charts in  which the expression of some metric
 $g$ is analytic,
 then the transition map 
 \[
\psi\circ\varphi^{-1}|_{\varphi(U\cap V)}:(\varphi(U\cap V), (\varphi^{-1})^*g)\to (\psi(U\cap V), (\psi^{-1})^*g)
\]
is an isometry between real-analytic metrics
 on open subsets of $\RR^n$ and therefore real-analytic itself. Thus the two charts must belong to the same real-analytic structure. At the same time, according to \cite{KotschwarAnalyticity}, the manifolds $(M, g(t))$ for $t > 0$ are real-analytic relative to the unique real-analytic
 structure $\Ac_t$ which contains the atlas of $g(t)$-normal coordinates. 
 Thus $\Ac$, if it exists, must contain-- and, by maximality, coincide with-- $\Ac_t$ for each $t > 0$.
 
 To prove the existence of $\Ac$, we will show that the structures $\Ac_t$ coincide for $t\in (0, T]$. Since $(0, T]$ is connected, we need only show that $\Ac_t$ is locally constant in $t$. This is a consequence
 of the following claim. 
 \begin{claim*} Let $t_0\in (0, T]$ and suppose the expression $g_{ij}(x, t_0)$ of $g(t_0)$ in the chart $(U, \varphi)$ is real-analytic in $x$.
 For any $x_0 \in U$,
 there exists an $\epsilon > 0$ and a neighborhood $V\subset U$ of $x_0$ such that, for each $t\in (0, T]$ with $|t - t_0| < \epsilon$, the expression of $g(t)$ in the same chart satisfies
 \[
  \sup_{V}|\partial^{(k)}g(\cdot, t)|\leq CL^kk! 
 \]
for all $k\geq 0$ and some $C$, $L$ independent of $k$.
 \end{claim*}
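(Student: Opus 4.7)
The claim is the uniform-in-time, fixed-chart analogue of the instantaneous analyticity result of \cite{KotschwarAnalyticity}, and my plan is to gauge-fix the Ricci flow using DeTurck's trick with the analytic background metric $\widetilde{g}=g(t_0)$ in order to replace the degenerate-parabolic Ricci flow in the chart $\varphi$ by a strictly parabolic equation whose coefficients are analytic in $x$.

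After shrinking $U$ to a precompact neighborhood of $x_0$, I would pick $0<t_1<t_0<t_0+\eta\leq T$ so that $g$ is smooth with bounded curvature on $U\times[t_1,t_0+\eta]$, and then solve the DeTurck flow
\[
\partial_{t}\widehat{g}_{ij}=-2R_{ij}(\widehat{g})+\nabla_{i}W_{j}+\nabla_{j}W_{i},\quad \widehat{g}(t_1)=g(t_1),
\]
where $W^{k}=\widehat{g}^{ij}(\Gamma^{k}_{ij}(\widehat{g})-\Gamma^{k}_{ij}(\widetilde{g}))$. This system is strictly parabolic quasilinear, with all coefficients analytic in $x$ in the chart $\varphi$ (since $\widetilde{g}$ is analytic in $\varphi$ by hypothesis), so standard quasilinear parabolic theory produces a unique smooth solution $\widehat{g}(t)$ on $V\times[t_1,t_0+\eta]$ for some precompact $V\Subset U$ containing $x_0$.

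Next, by the classical analyticity-in-space regularity theorem for strictly parabolic systems with analytic coefficients (cf.\ Friedman or Kinderlehrer--Nirenberg), $\widehat{g}(t)$ would be real-analytic in $x$ for $t>t_1$, with uniform factorial derivative bounds on compact parabolic subdomains: choosing $V'\Subset V$ a neighborhood of $x_0$ and $\eps$ with $[t_0-\eps,t_0+\eps]\subset(t_1,t_0+\eta]$,
\[
\sup_{V'\times[t_0-\eps,t_0+\eps]}|\partial_{x}^{(k)}\widehat{g}|\leq \widehat{C}\widehat{L}^{k}k!.
\]
The original Ricci flow is then recovered as $g(t)=\phi_{t}^{\ast}\widehat{g}(t)$, where $\phi_{t}:V'\to V$ solves the ODE $\partial_{t}\phi_{t}=W(\widehat{g}(t),t)\circ\phi_{t}$ with $\phi_{t_1}=\Id$. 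Since $W$ is real-analytic in $x$ (uniformly for $t\in[t_0-\eps,t_0+\eps]$) by the estimate just obtained together with the analyticity of $\widetilde{g}$, the analytic dependence of ODE solutions on spatial parameters would yield $\phi_{t}$ real-analytic in $x$ with uniform factorial bounds, and pulling back would deliver $\sup_{V'}|\partial^{(k)}g(\cdot,t)|\leq CL^{k}k!$ as required.

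The main technical obstacle is the invocation of the parabolic analyticity-in-space regularity theorem in this local (non-compact) setting for the specific DeTurck system: while the classical result is certainly in the literature, it must be applied carefully, since the coefficients are not smooth in $t$ down to $t=t_1$ and only the spatial analyticity is needed. A route more in keeping with the Bernstein--Bando framework of the present paper would be to run the covariant-derivative estimates of \cite{KotschwarAnalyticity} directly on $\widehat{g}$, using the strict parabolicity of the DeTurck flow (rather than the degenerate Ricci flow) to close a Bernstein-type maximum-principle argument uniformly in $t\in[t_0-\eps,t_0+\eps]$, thereby bypassing the classical theorem and obtaining the required factorial bounds on $\partial_{x}^{(k)}\widehat{g}$ directly.
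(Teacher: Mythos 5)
Your route is genuinely different from the paper's, and it has a gap at its foundation. You propose to solve the Ricci--DeTurck system on a precompact set $V\Subset U$ with initial data $g(t_1)$ and then recover $g(t)$ as $\phi_t^*\widehat{g}(t)$. But on a local domain without boundary conditions this initial value problem is not well-posed (``standard quasilinear parabolic theory'' does not produce a unique solution on $V\times[t_1,t_0+\eta]$ from interior initial data alone), and even if you fix this by imposing, say, Dirichlet data, there is no reason the pullback $\phi_t^*\widehat{g}(t)$ coincides with the given solution $g(t)$: both solve the Ricci flow on a shrinking family of local domains with the same data at $t=t_1$, but forward uniqueness for \emph{local} solutions of the Ricci flow fails, since information propagates in from the boundary. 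The correct way to run a gauge-fixing argument here is to construct $\widehat{g}$ \emph{from} the given $g(t)$ (e.g.\ by solving a local harmonic map heat flow into $(U,\widetilde{g})$ with boundary conditions and pushing $g$ forward), which you do not do; your fallback suggestion of running Bernstein estimates directly on $\widehat{g}$ inherits the same identification problem. The remaining steps (interior spatial analyticity for the quasilinear system, analytic dependence of the ODE flow $\phi_t$ on $x$ with uniform factorial bounds, stability of such bounds under composition and pullback) are plausible but each requires quantitative care that you only gesture at.

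For comparison, the paper avoids solving any auxiliary PDE. It starts from the local covariant estimate of \cite{KotschwarAnalyticity}, $\sup_{V_1\times[t_0/2,T]}|\nabla^{(k)}\Rm|\leq C_0L_0^k(k-2)!$, and then does pure bookkeeping: the evolution equation for the connection difference $G=\nabla_{g(t)}-\nabla_{g(t_0)}$ converts these curvature bounds into factorial bounds on $\nabla^{(k)}G$ for $|t-t_0|<\epsilon$ (Proposition 27 of \cite{KotschwarTimeAnalyticity}); a change-of-connection lemma (Proposition 25 there) turns those into bounds on $\overline{\nabla}^{(k)}g(t)$ with $\overline{\nabla}=\nabla_{g(t_0)}$; and finally the hypothesis that $g(t_0)$ is analytic in the chart controls $\partial-\overline{\nabla}$ and lets the same lemma convert to coordinate derivatives $\partial^{(k)}g$. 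This is entirely an ODE-in-time plus combinatorial argument on top of already-established estimates, with no gauge to match, which is precisely the difficulty your approach runs into.
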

 This claim follows from the local estimate in \cite{KotschwarAnalyticity} and two straightforward combinatorial estimates in  Section 8 of \cite{KotschwarTimeAnalyticity}.
 Fix $x_0 \in U$ and let $V_1 \subset V_0$ be precompact neighborhoods of $x_0$ with $\overline{V_1} \subset V_0 \subset \overline{V_0}\subset U$. 
 By Theorem 1.4 of \cite{KotschwarAnalyticity},
 there are constants $C_0$, $L_0$ depending on $n$, $t_0$, $\max\{T, 1\}$, and $\sup_{\overline{V_0}\times [0, T]} |\Rm|(x, t)$, such that
 \begin{equation}\label{eq:deriv}
    \sup_{(x, t)\in V_1\times [t_0/2, T]} |\nabla^{(k)}\Rm|(x, t) \leq C_0L_0^k (k-2)!
 \end{equation}
for all $k\geq 0$. (The ``lag'' of $-2$ in the factorial above is a trick, due to Lax, to aid the estimation of terms with multiple factors of $\nabla^{(k)}\Rm$.) 

Write $\bar{\nabla} = \nabla_{g(t_0)}$ and $G = \nabla -\bar{\nabla}$
(so, $G_{ij}^k = \Gamma_{ij}^k - \bar{\Gamma}_{ij}^k$). The estimate \eqref{eq:deriv} controls the evolution of the derivatives of $G$ 
on $V_1$,
and implies an estimate on these derivatives of the same form. Namely, from Proposition 27 of \cite{KotschwarTimeAnalyticity} there are positive constants $\epsilon$, $C_1$, and $L_1$
such that
\[
\sup_{x\in V_1}|\nabla^{(k)}G|(x, t) \leq C_1L_1^k(k-2)!
\]
for all $k\geq 0$
and $t\in (0, T]$ with $|t-t_0| < \epsilon$. Used in conjunction with Proposition 25 of \cite{KotschwarTimeAnalyticity}, it implies that, for such $t$, 
\begin{equation}\label{eq:derg}
\sup_{x\in V_1}|\overline{\nabla}^{(k)}g|(x, t) \leq C_2 L_2^k(k-2)!
\end{equation} for all $k\geq 0$ and some constants $C_2$, $L_2$ independent of $k$. 

By assumption, the expression of $\bar{g}$ in the chart $(U, \varphi)$ is analytic. Thus, regarding $\partial$ as a connection on $U$,
we can find a neighborhood $V_2 \subset V_1$ of $x_0$ such 
that $H = \partial - \delb$  (i.e., $H_{ij}^k = -\overline{\Gamma}_{ij}^k$)
satisfies $|\partial^{(k)}H|\leq C_3L_3^k(k-2)!$ on $V_2$ for some $C_3$ and $L_3$ independent of  $k$. (We may use any of the metrics for the norm
 in this estimate, since they are all uniformly equivalent on $V_2$.) Taking this estimate together with \eqref{eq:derg} and applying Proposition 25 of \cite{KotschwarTimeAnalyticity}
 again with the connections $\partial$ and $\delb$,
 we obtain at last constants $C_4$ and $L_4$ such that
 \[
\sup_{x\in V_2}|\partial^{(k)}g|(x, t)\leq C_4 L_4^k(k-2)!
\]
for all $k\geq 0$ and all $t\in (0, T]$ with $|t- t_0|<\epsilon$.
\end{proof}

\section{The isometry group near spatial infinity}
We now resume our discussion of shrinking solitons and revert to our previous notational conventions. We will use $(\Cc^{\Sigma}_{r_0}, \bar{g}, \bar{f})$ to denote a shrinking soliton that is dynamically
asymptotic to $\Cc^{\Sigma}$,  and $g = g(\tau)$ to denote the associated solution to \eqref{eq:brf} on $\Cc^{\Sigma}_{r_0}\times [0, 1]$ with $g(0) = \hat{g}$ and $g(1) = \bar{g}$.
As before, $f = f(\tau)$ will denote the evolving family of potentials on $\Cc^{\Sigma}_{r_0}\times (0, 1]$. 

We will break the proof of Theorem \ref{thm:isomp} into several smaller pieces.
First we show that a local isometry $\phi:\Cc^{\Sigma}_{r_0}\to \Cc^{\Sigma}_{r_0}$ of any $g(\tau)$ 
will also preserve $\nabla f$, unless 
$g$ is flat (and hence conical itself).  We already know this for $\tau = 0$, since, by  \eqref{eq:fid0}, any isometry of $(\Cc^{\Sigma}_0, \hat{g})$
will preserve $\nabla f = \frac{r}{2\tau}\pd{}{r}$. 
\begin{lemma}\label{lem:gradf} Suppose that $\phi:\Cc^{\Sigma}_{r_0} \to \Cc^{\Sigma}_{r_0}$
is a local isometry of $g = g(\tau)$ for some $\tau\in (0, 1]$.
Then either $\phi^*(\nabla f) = \nabla f$ or $g$ is flat.
\end{lemma}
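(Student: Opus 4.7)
The plan is to show that $Y := \phi^\ast\nabla f - \nabla f = \nabla(f\circ\phi - f)$ is a $g$-parallel vector field on $\Cc^\Sigma_{r_0}$, and then to argue that if $Y$ does not vanish identically then $g$ is flat. First, since $\phi^\ast g = g$ we also have $\phi^\ast\Rc(g) = \Rc(g)$ and $\phi^\ast(\nabla\nabla f) = \nabla\nabla(f\circ\phi)$, so pulling back the first identity of Proposition \ref{prop:brf}(4) and subtracting from the original gives $\nabla\nabla h = 0$ for $h := f\circ\phi - f$. Hence $Y = \nabla h$ is parallel, and in particular $\Rm(\cdot,\cdot,\cdot,Y) = 0$ via the commutator identity $[\nabla_i,\nabla_j]Y = 0$. (One may also derive $\Rc(Y,\cdot) = 0$ directly by pulling back $R + |\nabla f|^2 = f/\tau$ to obtain $2\la\nabla f, Y\ra + |Y|^2 = h/\tau$, differentiating, and using $\nabla\nabla f = g/(2\tau) - \Rc$.)

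Suppose for contradiction that $Y\not\equiv 0$. Connectedness of $\Cc^\Sigma_{r_0}$ and parallelism give $|Y|_g\equiv c > 0$, and $g$ splits locally as $ds^2 + g_N$ with $\partial_s = Y/c$; the soliton equation then forces $f = s^2/(4\tau) + b(y)$ (after a translation in $s$), with $(N, g_N, b)$ itself a shrinker. To conclude, I would transfer this splitting to the asymptotic cone by a blow-down. Using Proposition \ref{prop:brf} and the fact that $\Phi_\tau$ behaves asymptotically at infinity like the dilation $\rho_{1/\sqrt\tau}$ (which follows from $\tau\nabla f = (r/2)\partial_r$), the rescaled metrics $\lambda^{-2}\rho_\lambda^\ast g$ converge smoothly on compact subsets of $\Cc^\Sigma_0$ to $\hat g$ as $\lambda\to\infty$, while the vector fields $\tilde Y_\lambda := \lambda\rho_\lambda^\ast Y$ remain parallel for $\lambda^{-2}\rho_\lambda^\ast g$ with constant norm $c$. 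A subsequential smooth limit then produces a nonzero $\hat g$-parallel vector field on $(\Cc^\Sigma_0, \hat g)$. Writing such a field in conical coordinates in the form $A(\sigma)\partial_r + r^{-1}B^\alpha(\sigma)\partial_\alpha$, parallelism reduces to the Obata equation $\nabla^\Sigma\nabla^\Sigma A + A g_\Sigma = 0$ with $A\not\equiv 0$; Obata's theorem then forces $(\Sigma, g_\Sigma)$ to be the round unit sphere, so $\hat g$ is the flat Euclidean metric on $\RR^n\setminus\{0\}$. Finally, the uniqueness result of \cite{KotschwarWangConical} implies that $g$ is (an end of) the Gaussian shrinker and hence flat.

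The main obstacle is this final blow-down step: verifying that the rescaled parallel fields $\tilde Y_\lambda$ converge along a subsequence to a nonzero $\hat g$-parallel vector field. This requires uniform $C^k$ control on $Y$ at infinity, obtainable from the curvature estimates of Proposition \ref{prop:brf} together with the linear equation $\nabla Y = 0$, and careful handling of the differing scalings of the radial and tangential components of $Y$ under the dilations $\rho_\lambda$.
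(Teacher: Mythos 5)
Your first step coincides with the paper's: pulling back \eqref{eq:grstau} along $\phi$ and subtracting gives $\nabla\nabla h=0$ for $h=f\circ\phi-f$, so $Y=\nabla h$ is parallel with constant norm and $\Rm(\cdot,\cdot,\cdot,Y)=0$. After that you diverge, and the divergence is exactly where the proposal is incomplete: the blow-down step that you yourself flag as ``the main obstacle'' is the crux of your route and is not carried out. It can in fact be completed --- since all $g$-covariant derivatives of $Y$ vanish, the coordinate derivatives of $\tilde Y_\lambda$ in any fixed chart are controlled by the Christoffel symbols of $\lambda^{-2}\rho_\lambda^*g$, which converge to those of $\hat g$, so Arzel\`a--Ascoli yields a subsequential $C^\infty_{\mathrm{loc}}$ limit that is $\hat g$-parallel of norm $c>0$; your reduction of parallelism on the cone to the Obata equation is correct, and Obata then forces $(\Sigma,g_\Sigma)$ to be the round unit sphere, after which the uniqueness theorem of \cite{KotschwarWangConical} gives flatness. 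But as written the proof has a hole, and even when filled, the chain invokes Obata's theorem and the full backward-uniqueness theorem to establish what is, in the paper, an elementary lemma used upstream of the main argument. (The local splitting $ds^2+g_N$ in your second paragraph is also never used and can be deleted.)

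The paper finishes far more cheaply. From $\Rm(\nabla h,\cdot,\cdot,\cdot)=0$, differentiating and using the second Bianchi identity gives $\nabla_{\nabla h}\Rm=0$, so $|\Rm|^2$ is constant along integral curves of $\nabla h$. Writing $a=|\nabla h|>0$, the soliton identity \eqref{eq:grstau} gives $\frac{d^2}{ds^2}f(\sigma(s))=a^2/(2\tau)$ along any integral curve $\sigma$ of $\pm\nabla h$, so for every sufficiently remote point $p$ the curve through $p$ (with the sign chosen so that $\frac{d}{ds}f\geq 0$ at $s=0$) keeps $f$ above $\sup_{\partial\Cc^{\Sigma}_{2r_0}}f$, hence stays in the end for all $s\geq 0$ and has $f\to\infty$, hence $r\to\infty$, along it. Combined with the decay $|\Rm|\leq C/r^2$ from \eqref{eq:curvdecay}, the constancy of $|\Rm|^2$ along this curve forces $|\Rm(p)|=0$, and real-analyticity propagates the vanishing to all of $\Cc^{\Sigma}_{r_0}$. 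No blow-down, no Obata, no appeal to the uniqueness theorem. If you wish to keep your approach you must write out the missing compactness argument; otherwise the integral-curve argument is the one to use.
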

\begin{proof}
 Write $h = \phi^*f - f$. Then $g$ and  $\phi^*f$ together satisfy \eqref{eq:grstau} on $\Cc^{\Sigma}_{r_0}$, and 
 subtracting the two instances of equation \eqref{eq:grstau} yields that $\nabla\nabla h = 0$. 
 In particular, $|\nabla h| = a$ for some constant $a \geq 0$. Suppose that $a > 0$.  We claim that $\Rm\equiv0$.
 
 Since $\nabla\nabla h= 0$, we have that $\operatorname{Rm}(\nabla h, \cdot, \cdot, \cdot) = 0$. Differentiating this equation
 shows that $\nabla \Rm(\nabla h, \cdot, \cdot, \cdot, \cdot) = 0$ and hence also that $\nabla_{\nabla h}\Rm = 0$. In particular,
 $|\Rm|^2$ is constant on the integral curves of $\nabla h$. However, $|\Rm(r, \sigma)| \leq C(n, K_0)/r^2$ by \eqref{eq:curvdecay}, so if we can show that every sufficiently remote
 point of $\Cc^{\Sigma}_{r_0}$ sits on an integral curve of $\pm \nabla h$ that remains in $\Cc^{\Sigma}_{r_0}$ and tends to spatial infinity, we can conclude that $\Rm$ must vanish identically.
 
 For this, note that, by \eqref{eq:fid0} we can find $r_1 > 2r_0$ (depending on our fixed $\tau$ and $N_0$) such that $\inf_{\Cc^{\Sigma}_{r_1}} f > \sup_{\partial\Cc^{\Sigma}_{2r_0}} f$.
 Fix $p\in \Cc^{\Sigma}_{r_1}$ and let $b = \langle \nabla f, \nabla h\rangle(p)$ and $c= f(p)$. If $b\geq 0$, define $X = \nabla h$. Otherwise, define $X = -\nabla h$. 
 Let $\sigma$ denote the maximally defined integral curve of $X$ in $\Cc^{\Sigma}_{r_0}$ with $\sigma(0)= p$.  Using \eqref{eq:grstau},
 we have
 \begin{align*}
    \frac{d^2}{ds^2} f(\sigma(s)) &= \left(\nabla\nabla f(\nabla h, \nabla h) + \nabla\nabla h(\nabla f, \nabla h)\right)(\sigma(s))\\
    &= -\Rc(\nabla h, \nabla h)(\sigma (s)) + \frac{|\nabla h|^2(\sigma(s))}{2\tau } = \frac{a^2}{2\tau}
 \end{align*}
where $\sigma(s)$ is defined.
Thus, $f(\sigma(s)) = a^2s^2/(4\tau) + |b|s + c$ for all $s$ such that $\sigma(s)\in \Cc^{\Sigma}_{r_0}$. In particular, 
$f(\sigma(s)) \geq c > \sup_{\partial\Cc^{\Sigma}_{2r_0}} f$ for $s\geq 0$, so $\sigma$ is defined and remains in $\Cc^{\Sigma}_{2r_0}$ at least for $s\in [0, \infty)$. 
Since $f(\sigma(s))\to \infty$ as $s\to \infty$, we must have
$r(\sigma(s)) \to\infty$ as $s\to \infty$ as well, and consequently $|\Rm(p)|^2 = \lim_{s\to \infty}|\Rm|^2(\sigma(s)) = 0$.

But $p$ was arbitrary, so $\Rm\equiv 0$ on $\Cc^{\Sigma}_{r_1}$. From the analyticity of $(\Cc^{\Sigma}_{r_0}, g)$
it follows that $\Rm\equiv 0$ on $\Cc^{\Sigma}_{r_0}$, and $g$ is flat as claimed.
\end{proof}

The key ingredient in the proof of Theorem 2.2 is the following application of the backward uniqueness theorem in \cite{KotschwarWangConical}, which 
implies that an isometry of the end of the cone is also an isometry of the shrinker on some neighborhood of infinity. 
\begin{proposition}\label{prop:isomp0}
If $(\Cc^{\Sigma}_{r_0}, g, f)$ is dynamically asymptotic to $(\Cc^{\Sigma}_{r_0}, \hat{g})$, then, for any isometry $\phi\in \operatorname{Isom}(\Cc^{\Sigma}_{0}, \hat{g})$ there is $r_1\geq r_0$
such that $\phi|_{\Cc^{\Sigma}_{r_1}} \in \operatorname{Isom}(\Cc^{\Sigma}_{r_1}, g)$.
\end{proposition}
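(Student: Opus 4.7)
The plan is to lift the question to the parabolic picture of Proposition~\ref{prop:brf} and compare the backward Ricci flow $g(\tau)$ (with $g(0)=\hat g$ and $g(1) = g$) to its pullback $\tilde g(\tau) = \phi^* g(\tau)$, using the backward uniqueness theorem of \cite{KotschwarWangConical} to force $\tilde g = g$ near infinity.

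First I would dispose of the case in which $\phi$ does not fix the vertex of the cone: as noted in the commentary following Theorem~\ref{thm:isomp}, $(\Cc^{\Sigma}, \hat g)$ is then flat Euclidean, $g$ is (part of) the Gaussian, and the conclusion is immediate. I therefore assume $\phi$ fixes the vertex, so that it preserves the radial function $r$ and restricts to a diffeomorphism $\phi:\Cc^{\Sigma}_{r_0}\to \Cc^{\Sigma}_{r_0}$. Setting $\tilde g(\tau) = \phi^* g(\tau)$ on $\Cc^{\Sigma}_{r_0}\times[0,1]$, the diffeomorphism invariance of \eqref{eq:brf} shows that $\tilde g$ is again a backward Ricci flow; because $\phi$ preserves $r$, the decay bound \eqref{eq:curvdecay} passes to $\tilde g$ with the same constant $K_0$; and because $\phi\in\Isom(\Cc^{\Sigma}_0,\hat g)$,
\[
\tilde g(0) = \phi^*\hat g = \hat g = g(0).
\]

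At this point $g$ and $\tilde g$ are two backward Ricci flows on the asymptotically conical end $\Cc^{\Sigma}_{r_0}\times[0,1]$ with matching curvature decay and identical conical data at $\tau=0$. Applying the backward uniqueness theorem of \cite{KotschwarWangConical} produces $r_1\geq r_0$ such that $g(\tau) = \tilde g(\tau)$ on $\Cc^{\Sigma}_{r_1}$ for all $\tau\in[0,1]$; specializing to $\tau=1$ gives $\phi^* g = g$ on $\Cc^{\Sigma}_{r_1}$, as required. The substantive part of the argument is the alignment of the pair $(g,\tilde g)$ with the hypotheses of the uniqueness theorem in \cite{KotschwarWangConical}---in particular, checking that $\tilde g$ inherits the required decay and convergence bounds from $g$ through the pullback by $\phi$ (which is where the vertex-fixing property is essential), and that the endpoint $\tau=0$, which is only a continuous-extension endpoint for the flow constructed in Proposition~\ref{prop:brf}, meshes cleanly with the initial-value formulation of the uniqueness statement.
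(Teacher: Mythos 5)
Your overall strategy---pull back the self-similar solution by $\phi$, observe that the pulled-back flow has the same conical data at $\tau=0$ and the same decay constants, and invoke the uniqueness theorem of \cite{KotschwarWangConical}---is exactly the paper's strategy. But there is a genuine gap at the final step. The backward uniqueness theorem (Theorem 2.2 of \cite{KotschwarWangConical}) does \emph{not} give $g(\tau)=\tilde g(\tau)$ on $\Cc^{\Sigma}_{a_0}$ for all $\tau\in[0,1]$; it only yields an $a_0\geq r_0$ and a $T_0\in(0,1]$, possibly small, with $g\equiv\tilde g$ on $\Cc^{\Sigma}_{a_0}\times[0,T_0]$. (The paper makes a point of this immediately after the proof: the existence of $T_0$ is established indirectly and may a priori depend on $\phi$.) So you cannot simply ``specialize to $\tau=1$''; the coincidence at the single time $T_0$ must still be propagated to $\tau=1$, and that is the step your proposal omits.

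The paper bridges this using the self-similar structure rather than more PDE. Because $\phi$ preserves $r$ and $\Phi_\tau(r,\sigma)=(r/\sqrt{\tau},\sigma)$, the flow map $\tilde\Phi_\tau=\phi^{-1}\circ\Phi_\tau\circ\phi$ associated to $\tilde f=\phi^*f$ coincides with $\Phi_\tau$; hence $\tilde g(\tau)=\tau\,\Phi_\tau^*(\phi^*\bar g)$ while $g(\tau)=\tau\,\Phi_\tau^*\bar g$. Equality of these at the one time $T_0$ on $\Cc^{\Sigma}_{a_0}$ then pushes forward under $\Phi_{T_0}$ to give $\phi^*\bar g=\bar g$ on $\Phi_{T_0}(\Cc^{\Sigma}_{a_0})=\Cc^{\Sigma}_{a_0/\sqrt{T_0}}$, which is the desired conclusion with $r_1=a_0/\sqrt{T_0}$. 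Note also that the identification $\tilde\Phi_\tau=\Phi_\tau$ is what verifies that $(\Cc^{\Sigma}_{r_0},\tilde g,\tilde f)$ is itself dynamically asymptotic to the cone with the same constants $K_0$, $N_0$---the cited theorem is about pairs of such soliton structures, not arbitrary backward Ricci flows with curvature decay, so checking the decay of $\tilde g$ alone is not quite enough; you need the pulled-back potential and its flow as well. (Your preliminary case distinction about vertex-fixing is harmless but unnecessary: any element of $\operatorname{Isom}(\Cc^{\Sigma}_0,\hat g)$ already preserves $r$ and is of the form $\operatorname{Id}\times\gamma$.)
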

\begin{proof}
Let $\tilde{g} = \phi^*g$, $\tilde{f} = \phi^*f$. Then $(\Cc^{\Sigma}_{r_0}, \tilde{g}, \tilde{f})$ is also dynamically asymptotic to $(\Cc^{\Sigma}_{r_0}, \hat{g})$.
In fact, since $\phi$ preserves $r$, and $\Phi_{\tau}(r, \sigma) = (r/\sqrt{\tau}, \sigma)$, the corresponding solution $\tilde{\Phi}_{\tau} = 
\phi^{-1}\circ \Phi_{\tau} \circ \phi$ to \eqref{eq:phisys} associated to $\delt \tilde{f}$
actually coincides with $\Phi_{\tau}$, so $\tilde{g}(\tau) =\phi^*g(\tau)$ and $\tau\tilde{f}(\tau) = \tau\phi^*f(\tau)$ converge smoothly to $\hat{g}$ and $r^2/4$, 
and the estimates \eqref{eq:curvdecay}, \eqref{eq:fid0} will hold with the same constants $K_0$ and $N_0$.
By Theorem 2.2 of \cite{KotschwarWangConical}, there is an $a_0 \geq r_0$ and $T_0\in (0, 1]$ such that $g \equiv \tilde{g}$
on $\Cc^{\Sigma}_{a_0}\times [0, T_0]$.  In particular,
\[
\phi^*(g(1)) = \tilde{g}(1) = (\Phi_{T_0}\circ\tilde{\Phi}_{T_0}^{-1})^*(g(1)) = g(1)
\]
on $\Phi_{T_0}(\Cc^{\Sigma}_{a_0}) = \Cc^{\Sigma}_{a_0/\sqrt{T_0}}$.
 \end{proof}

Inspecting the proof of Theorem 2.2 in \cite{KotschwarWangConical} reveals that $a_0$
can be bounded above in terms of $K_0$, $N_0$, and other parameters external to the proof and independent of $\phi$. However, the existence of $T_0$ is established indirectly, leaving open the 
possibility that it (and therefore the value of $r_1$ in Proposition \ref{prop:isomp0}) above could depend on $\phi$. We believe the argument could be reworked to give an effective
proof of the existence of $T_0$. However, for the application we are seeking, it is simpler to use the result proven in the last section to argue that $\phi^*g = g$
on all of $\Cc^{\Sigma}_{r_0}$.

\begin{proposition}\label{prop:isomext}
 Suppose that, for some $\tau_0 \in [0, 1]$ and $a\geq r_0$,  $\phi:\Cc^{\Sigma}_{a}\to \Cc^{\Sigma}_{r_0}$ is a local isometry of $g(\tau_0)$.  If $\phi^*g(\tau_1) = g(\tau_1)$
 on $\Cc^{\Sigma}_{b}$ for some $\tau_1\in [0, 1]$ and $b \geq a$, 
 then $\phi^*g(\tau_1) = g(\tau_1)$ on $\Cc^{\Sigma}_a$, too.
\end{proposition}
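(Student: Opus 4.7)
The plan is to reduce the extension problem to a standard invocation of the identity theorem for real-analytic tensor fields, using the real-analytic structure furnished by Corollary \ref{cor:analyticcone}. That corollary, together with the proof of Theorem \ref{thm:analyticstructure}, supplies a single real-analytic structure $\Ac$ on $\Cc^{\Sigma}_{r_0}$ with respect to which every time-slice $g(\tau)$, $\tau\in[0,1]$, is real-analytic; the cone endpoint $\tau=0$ and the shrinker endpoint $\tau=1$ are both covered, since the proof of Corollary \ref{cor:analyticcone} slightly extends the Ricci flow past $\tau=1$ before applying Theorem \ref{thm:analyticstructure}.

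With $\Ac$ in hand, the first thing I would verify is that $\phi$ is itself real-analytic with respect to $\Ac$. Since $g(\tau_0)$ is a real-analytic metric on both the source and target of $\phi$, and the $g(\tau_0)$-normal coordinate charts belong to $\Ac$, expressing the local isometry $\phi$ in such normal coordinates gives the classical proof of its real-analyticity. The second step is to observe that the $(0,2)$-tensor
\[
T \dfn \phi^\ast g(\tau_1) - g(\tau_1)
\]
is then real-analytic on $\Cc^{\Sigma}_a$ with respect to $\Ac$, because $g(\tau_1)$ is analytic and $\phi^\ast g(\tau_1)$ is the pullback of an analytic tensor by an analytic map.

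Finally, the hypothesis gives $T\equiv 0$ on the open subset $\Cc^{\Sigma}_b\subset\Cc^{\Sigma}_a$. Every connected component of $\Cc^{\Sigma}_a$ has the form $(a,\infty)\times\Sigma_i$ for a connected component $\Sigma_i$ of $\Sigma$, and the nonempty open set $(b,\infty)\times\Sigma_i\subset\Cc^{\Sigma}_b$ lies in that component; the identity theorem for real-analytic tensor fields therefore forces $T\equiv 0$ on each component, hence on all of $\Cc^{\Sigma}_a$. I do not expect any genuine obstacle here, since the substantive analytic work was already done in Section 3; the task is simply to string the analyticity observations together and invoke analytic continuation.
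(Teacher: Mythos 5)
Your proposal is correct and follows essentially the same route as the paper: obtain a single real-analytic structure from Theorem \ref{thm:analyticstructure} (with the endpoint cases $\tau=0,1$ handled via the extension of the flow as in Corollary \ref{cor:analyticcone}), note that $\phi$ is analytic because it is a local isometry of the analytic metric $g(\tau_0)$, and apply the identity theorem to $\phi^\ast g(\tau_1)-g(\tau_1)$. Your additional remarks about the endpoints and about each component of $\Cc^{\Sigma}_a$ meeting $\Cc^{\Sigma}_b$ are details the paper leaves implicit, but the argument is the same.
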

\begin{proof}
By Theorem \ref{thm:analyticstructure}, the atlas of $g(\tau_0)$-normal coordinates on $\Cc^{\Sigma}_{r_0}$ induces a real-analytic structure $\Ac$ on $\Cc^{\Sigma}_{r_0}$ relative to which
both $g(\tau_0)$ and $g(\tau_1)$ are real-analytic. Since $\phi$ is a local isometry of $g(\tau_0)$ on $\Cc^{\Sigma}_{a}$, it is real-analytic relative to $\Ac$, too. But then $h =\phi^*g(\tau_1)-g(\tau_1)$ is also real-analytic on $\Cc^{\Sigma}_{a}$. Since $h\equiv 0$ on $\Cc^{\Sigma}_{b}$, we must have $h\equiv 0$ on $\Cc^{\Sigma}_{a}$ as well.
\end{proof}

For the proof of the reverse inclusion in Theorem \ref{thm:isomp}, we will need to know that an isometry of
$g(\tau)$ for some $\tau \in (0, 1]$ must also preserve $\bar{g} = g(1)$. Since $(\Cc^{\Sigma}_{r_0}, g(\tau))$  is incomplete, we cannot appeal directly to the backward uniqueness theorem in \cite{KotschwarRFBU}. However, the situation here is more elementary to begin with. Since  $g(\tau)$ is self-similar for $\tau \in (0, 1]$,  and,
by Lemma \ref{lem:gradf},
 we may assume that the isometry preserves $\nabla_{g(\tau)}f(\tau)$, the problem reduces to that of the uniqueness of the solutions
to the ODE \eqref{eq:phisys}. 
\begin{proposition}
\label{prop:origisom} Let $a \geq r_0$ and suppose $\phi^*g(T) = g(T)$ on $\Cc^{\Sigma}_{a}$ for some $0 < T\leq 1$
and some injective local diffeomorphism $\phi:\Cc^{\Sigma}_a\to \Cc^{\Sigma}_a$. Then $\phi^*\bar{g} = \bar{g}$ on $\Cc^{\Sigma}_a$.
\end{proposition}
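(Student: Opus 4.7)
The plan is to exploit the self-similar relation $g(\tau) = \tau\Phi_\tau^*\bar g$ from Proposition \ref{prop:brf} to reduce the identity $\phi^*\bar g = \bar g$ to uniqueness of solutions for the generating ODE \eqref{eq:phisys}, and then to upgrade a partial equality of pullbacks to all of $\Cc_a^{\Sigma}$ via the real-analytic extension from Proposition \ref{prop:isomext}.

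I would first apply Lemma \ref{lem:gradf} to the isometry $\phi$ of $g(T)$ on $\Cc_a^{\Sigma}$. Either $g(T)$ is flat, in which case the self-similar relation transfers flatness to $\bar g$ on $\Cc_{a/\sqrt T}^{\Sigma}$, and hence to all of $\Cc_{r_0}^{\Sigma}$ by the analyticity of $\bar g$ (Corollary \ref{cor:analyticcone}); the backward flow equation \eqref{eq:brf} then reads $\partial_\tau g = 0$, forcing $g(\tau) \equiv \bar g$ and reducing the conclusion to the hypothesis. Otherwise $\phi^*\nabla_{g(T)} f(T) = \nabla_{g(T)} f(T)$, and I focus on this non-flat case below.

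The identity $\tau\nabla f = (r/2)\partial_r$ in \eqref{eq:fid0} shows that $T\nabla_{g(T)} f(T)$ is the ($\tau$-independent) field $\delb \bar f = \grad_{\bar g}\bar f$, so $\phi^*\grad_{\bar g}\bar f = \grad_{\bar g}\bar f$. Since $\Phi_\tau$ is generated by $-\tau^{-1}\grad_{\bar g}\bar f$ with $\Phi_1 = \Id$, the conjugated family $\tilde{\Phi}_\tau := \phi^{-1}\circ\Phi_\tau\circ\phi$ satisfies the same initial value problem \eqref{eq:phisys} on its domain; by ODE uniqueness, $\tilde{\Phi}_\tau = \Phi_\tau$, equivalently $\phi\circ\Phi_\tau = \Phi_\tau\circ\phi$ on the set where all compositions make sense. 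Substituting $g(T) = T\Phi_T^*\bar g$ into the hypothesis then yields
\[
T\Phi_T^*\bar g = \phi^*g(T) = T(\Phi_T\circ\phi)^*\bar g = T(\phi\circ\Phi_T)^*\bar g = T\Phi_T^*(\phi^*\bar g),
\]
and since $\Phi_T(r,\sigma) = (r/\sqrt T,\sigma)$ carries $\Cc_a^{\Sigma}$ diffeomorphically onto $\Cc_{a/\sqrt T}^{\Sigma}$, this forces $\phi^*\bar g = \bar g$ on $\Cc_{a/\sqrt T}^{\Sigma}$.

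Finally, since $a/\sqrt T \ge a$ and $\phi$ is a local isometry of $g(T)$ on $\Cc_a^{\Sigma}$, Proposition \ref{prop:isomext}, applied with $\tau_0 = T$, $\tau_1 = 1$, and $b = a/\sqrt T$, extends the equality from $\Cc_{a/\sqrt T}^{\Sigma}$ to all of $\Cc_a^{\Sigma}$ by analytic continuation. The main obstacle is the careful bookkeeping of overlapping domains: because $\phi$ is only an injective local diffeomorphism of $\Cc_a^{\Sigma}$ rather than a global one, both the commutation $\phi\circ\Phi_\tau = \Phi_\tau\circ\phi$ and the subsequent pullback identity must be interpreted on appropriate subsets, and the equality $\phi^*\bar g = \bar g$ is initially obtained only on the strict subset $\Cc_{a/\sqrt T}^{\Sigma}\subset\Cc_a^{\Sigma}$ before the real-analytic structure of Section 3 fills in the rest.
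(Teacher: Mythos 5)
Your proposal follows the same route as the paper's proof: split off the flat case (where the flow is static and the claim is the hypothesis), use Lemma \ref{lem:gradf} to get $\phi^*\delb\bar f = \delb\bar f$, deduce the commutation $\phi\circ\Phi_\tau = \Phi_\tau\circ\phi$ by ODE uniqueness, push the hypothesis through the self-similarity $g(T) = T\Phi_T^*\bar g$ to get $\phi^*\bar g = \bar g$ on an end, and finish with Proposition \ref{prop:isomext}. The architecture and the final pullback computation are correct.

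However, the one point you explicitly defer --- ``the careful bookkeeping of overlapping domains'' --- is precisely where the paper does its only nontrivial work in this proof, and you give no argument for it. Since $\phi$ is merely an injective local diffeomorphism, $\phi(\Cc^{\Sigma}_a)$ is just some open subset of $\Cc^{\Sigma}_a$, while $\Phi_\tau$ pushes points toward spatial infinity; a priori $\Phi_\tau(\phi(x))$ could leave $\phi(\Cc^{\Sigma}_a)$ at once, so that $\tilde\Phi_\tau = \phi^{-1}\circ\Phi_\tau\circ\phi$ is defined nowhere and ``the set where all compositions make sense'' is empty. Your displayed computation then silently applies the commutation on all of $\Cc^{\Sigma}_a$, which is more than you established. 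The paper closes this gap as follows: $\phi^*\delb\bar f = \delb\bar f$ gives $f\circ\phi = f + c_0$, and the growth estimate \eqref{eq:fid0} then yields radii $a\le b_0\le b_1$ with $\phi(\Cc^{\Sigma}_{b_1})\subset\Cc^{\Sigma}_{b_0}\subset\phi(\Cc^{\Sigma}_a)$; since $\Phi_\tau(\Cc^{\Sigma}_{b_0})\subset\Cc^{\Sigma}_{b_0}$, the conjugation is well defined on $\Cc^{\Sigma}_{b_1}\times(0,1]$, and obtaining $\phi^*\bar g = \bar g$ only on $\Cc^{\Sigma}_{b_1/\sqrt T}$ rather than on $\Cc^{\Sigma}_{a/\sqrt T}$ costs nothing because Proposition \ref{prop:isomext} needs the equality only on some end. (Alternatively, you could avoid $\phi^{-1}$ entirely by noting that $\tau\mapsto\phi\circ\Phi_\tau$ and $\tau\mapsto\Phi_\tau\circ\phi$ are both globally defined on $\Cc^{\Sigma}_a\times(0,1]$, satisfy the same initial-value problem \eqref{eq:phisys} because $\phi_*(\delb\bar f) = \delb\bar f\circ\phi$, and agree at $\tau=1$.) Either way, this step must be carried out, not merely flagged as an obstacle.
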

\begin{proof} Write $g = g(T)$, $f = f(T)$, and $\nabla = \nabla_{g(T)}$. If $g$ is Ricci-flat on $\Cc^{\Sigma}_a$, 
then $\Phi_{T}^*\Rc(\bar{g}) \equiv 0$, so $\bar{g}$ is Ricci-flat on $\Phi_T(\Cc^{\Sigma}_{a}) = \Cc^{\Sigma}_{a/\sqrt{T}}$, and, hence,
on all of $\Cc^{\Sigma}_{r_0}$, by the real-analyticity of $(\Cc^{\Sigma}_{r_0}, \bar{g})$. 
But then the solution $g(\tau)$ to \eqref{eq:brf} is static, so $\bar{g} = g(1) = g(T) = g$, and the claim holds. 

Suppose then that $g$ is not Ricci-flat.
By Lemma \ref{lem:gradf}, $\phi^*X = X$ on $\Cc^{\Sigma}_a$, where
$X = T\nabla f = \Phi_T^*(\delb \bar{f}) = \delb\bar{f}$. Then, $f\circ\phi = f + c_0$, and \eqref{eq:fid0} implies that we can choose $b_0$ and $b_1$ depending on $a$, $N_0$, and $T$ such that $a \leq b_0 \leq b_1$
and
\[
\phi(\Cc^{\Sigma}_{b_1})\subset \Cc^{\Sigma}_{b_0}\subset \phi(\Cc^{\Sigma}_a).
\]
Since $\Phi_{\tau}(\Cc^{\Sigma}_{b_0}) = \Cc^{\Sigma}_{b_0/\sqrt{\tau}} \subset \Cc^{\Sigma}_{b_0}$ for all $\tau \in (0, 1]$, the map $\Psi_{\tau} = \phi^{-1}\circ \Phi_{\tau} \circ \phi$
is well-defined on $\Cc^{\Sigma}_{b_1}\times (0, 1]$. But $\phi^*X = X$ on $\Cc^{\Sigma}_{a}$, so $\Psi_{\tau}$ and $\Phi_{\tau}$ satisfy the same initial-value
problem and therefore coincide on $\Cc^{\Sigma}_{b_1}\times (0, 1]$. Thus $\phi \circ \Phi_{\tau} = \Phi_{\tau} \circ \phi$ on $\Cc^{\Sigma}_{b_1}\times (0, 1]$,
Then, on $\Cc^{\Sigma}_{b_1/\sqrt{T}} = \Phi_T(\Cc^{\Sigma}_{b_1})$, we have  $\phi^*\bar{g} = \bar{g}$ since
\begin{align*}
  \phi^*\bar{g} &=  T^{-1}\phi^*\left(\Phi^{-1}_{T}\right)^*g(T) =  T^{-1}\left(\Phi^{-1}_{T}\right)^*\phi^*g(T) =T^{-1}\left(\Phi^{-1}_{T}\right)^*g(T) = \bar{g}.
\end{align*}
But then Proposition \ref{prop:isomext} implies that $\phi^*\bar{g} = \bar{g}$ on $\Cc^{\Sigma}_a$ as well.
\end{proof}
Now we assemble the proof of Theorem 2.2 from the pieces above.
\begin{proof}[Proof of Theorem \ref{thm:isomp}]
Suppose first that $\phi \in \Isom(\Cc^{\Sigma}_{r_0}, \bar{g})$. We may assume that $\bar{g}$ is not flat, since, otherwise, the solution $g(\tau)$ to \eqref{eq:brf} associated to $\bar{g}$ is static and $\bar{g} = g(1) = g(0) = \hat{g}$, and the conclusion follows trivially.

Then we must have $\phi^*\delb \bar{f} = \delb \bar{f}$ by Lemma \ref{lem:gradf}. Since $\phi$ is a diffeomorphism of $\Cc^{\Sigma}_{r_0}$,
$\phi^{-1}\circ \Phi_{\tau}\circ\phi$ is well-defined on $\Cc^{\Sigma}_{r_0}\times (0, 1]$ and satisfies the same equation as $\Phi_{\tau}$.
This implies that $\phi$ commutes with $\Phi_{\tau}$ on $\Cc^{\Sigma}_{r_0}\times (0, 1]$, so
\[
 \phi^*g(\tau) = \tau\phi^*\Phi^{\ast}_{\tau}\bar{g} = \tau\Phi_{\tau}^*\phi^*\bar{g} = \tau \Phi_{\tau}^*\bar{g} = g(\tau)
\]
for all $\tau \in (0, 1]$. By continuity, we then conclude that
\[
 \phi^*\hat{g} = \lim_{\tau\to 0}\phi^*g(\tau) = \lim_{\tau\to 0} g(\tau) = \hat{g}
\]
on $\Cc^{\Sigma}_{r_0}$. As we have observed in the proof of Theorem \ref{thm:isom}, any diffeomorphism of $\Cc^{\Sigma}_{r_0}$ onto itself which preserves $\hat{g}$ must
preserve $r$, so $\phi$ must be of the form $\operatorname{Id}\times \gamma$ for some $\gamma\in \Isom(\Sigma, g_{\Sigma})$.

Now suppose that $\phi= \operatorname{Id}\times \gamma \in \Isom(\Cc^{\Sigma}_{r_0}, \hat{g})$. By Proposition \ref{prop:isomp0}, there is $r_1 \geq r_0$ such that $\phi^*\bar{g} = \bar{g}$
on $\Cc^{\Sigma}_{r_1}$. Proposition \ref{prop:isomext} then implies that $\phi^*\bar{g} = \bar{g}$ on $\Cc^{\Sigma}_{r_0}$.
\end{proof}

 \section{Extension of isometries in the complete case}

A classical theorem of Myers \cite{Myers} states that, if $(M, g)$, $(N, h)$ are complete, connected, simply connected real-analytic manifolds and
$\varphi: U\to V$ is an isometry between connected open sets $U\subset M$ and $V\subset N$, then $\varphi$ extends to an isometry $\Phi:M\to N$.
Here we show that the hypothesis of simple-connectivity can be exchanged for the assumption that the fundamental group of $U$ surjects onto that of $M$. 
As we have seen, this technical improvement has has some useful applications to the extension of isometries on complete asymptotically K\"ahler shrinkers
and its proof
requires just a few modifications to the original argument,
The ingredients
for the proof (if not the proof itself) are classical, see, e.g., \cite{Helgason}, \cite{KobayashiNomizu}, \cite{Nomizu}; as we are not aware of an appropriate
reference, we provide a proof here for completeness.

First, we recall some definitions from \cite{Helgason}. Two isometries $\varphi_i:U_i\to V_i$, $i=1$, $2$ are said to be \emph{immediate continuations} 
if $U_1\cap U_2\neq\emptyset$ and $\varphi_1 = \varphi_2$ on $U_1\cap U_2$. Given an isometry
$\varphi: U\to V$ and a continuous curve $\gamma:[0, 1]\to M$ with $\gamma(0) \in U$, a \emph{continuation} of $\varphi$ along $\gamma$
is defined to be a collection $\{\varphi_t\}_{t\in [0, 1]}$ of isometries $\varphi_t: U_t\to V_t$ between open subsets $U_t\subset M$, $V_t\subset N$ satisfying that $\varphi_0 = \varphi$
and that $\varphi_{t_1}$ and $\varphi_{t_2}$ are immediate continuations whenever $|t_1 - t_2|$ is sufficiently small.

The following statement is contained in Propositions 11.3-11.4 of \cite{Helgason}.
\begin{proposition}
\label{prop:pathcont}
Let $(M, g)$ and $(N, h)$ be complete, real-analytic Riemannian manifolds and $\varphi:U\to V$ an isometry between connected open sets $U\subset M$, $V\subset N$.
Then, 
\begin{enumerate}
\item[(a)] $\varphi$ admits a continuation $\{\varphi_t\}_{t\in [0, 1]}$ along any continuous $\gamma:[0, 1]\to M$ with $\gamma(0) \in U$. 
\item[(b)] If $\sigma:[0, 1]\to M$ is continuous and path-homotopic to $\gamma$ and $\{\psi_t\}_{t\in [0, 1]}$ is a continuation of $\varphi$ along $\sigma$, then $\varphi_1 = \psi_1$ on a neighborhood
 of $\gamma(1) = \sigma(1)$.
 \end{enumerate}
\end{proposition}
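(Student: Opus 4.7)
The plan is to reduce both statements to a single local extension principle: near any $p\in U$, the isometry $\varphi$ is uniquely determined by its $1$-jet $(\varphi(p), d\varphi_p)$, and by the real-analyticity of $(M,g)$ and $(N,h)$, the formula
\[
 \varphi = \exp_{\varphi(p)}\circ\, d\varphi_p \circ \exp_p^{-1}
\]
provides a valid isometric extension onto any normal ball $B_r(p)$ on which both $\exp_p$ and $\exp_{\varphi(p)}\circ d\varphi_p$ are diffeomorphisms. Completeness of $M$ and $N$ ensures $\exp$ is defined on the whole tangent space, so the admissible $r$ is bounded below locally and, on compact subsets, uniformly.

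For part (a), I would let $T\subset [0,1]$ be the set of $t$ for which a continuation of $\varphi$ along $\gamma|_{[0,t]}$ exists, and show $T$ is both open and closed. Openness is immediate from the definition and continuity of $\gamma$. For closedness, given $t_n \nearrow t_0$ in $T$ with continuations $\{\varphi_s\}_{s\in[0,t_n]}$, I would use that isometries preserve lengths of rectifiable curves, so the points $\varphi_{t_n}(\gamma(t_n))$ remain in a compact subset of $N$; extracting a convergent subsequence of $1$-jets at the $\gamma(t_n)$, the local extension principle produces an isometry $\varphi_{t_0}$ on a normal ball around $\gamma(t_0)$ which, by real-analytic continuation, agrees with $\varphi_{t_n}$ on their common domain for large $n$. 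This puts $t_0 \in T$ and forces $T=[0,1]$.

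For part (b), the main ingredient is a uniform covering argument. Given a path-homotopy $H:[0,1]^2\to M$ from $\gamma$ to $\sigma$ with fixed endpoints, and letting $\varphi^s$ denote any continuation of $\varphi$ along $H(s,\cdot)$, let $G(s)$ denote the germ of $\varphi^s_1$ at the common endpoint. I would show $G$ is locally constant. Fix $s_0$, choose a finite partition $0 = t_0 < \cdots < t_N = 1$ such that each $H(s_0, [t_i, t_{i+1}])$ lies inside a normal ball on which $\varphi^{s_0}_{t_i}$ is defined, and then use continuity of $H$ together with the lower bound on injectivity radii to find $\delta > 0$ so that, for $|s-s_0|<\delta$, each $H(s, [t_i, t_{i+1}])$ still lies in the same respective domain. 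The collection $\{\varphi^{s_0}_{t_i}\}$ then furnishes a valid continuation of $\varphi$ along $H(s,\cdot)$, and by the uniqueness part of the extension principle this continuation must agree with $\varphi^s$ in germ at each successive endpoint, yielding $G(s) = G(s_0)$. Since $[0,1]$ is connected, $G$ is constant, giving $\varphi^1_1 = \psi_1$ in a neighborhood of $\gamma(1)=\sigma(1)$.

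The hard part will be the closedness step in (a) and the uniformity required in (b): one must exploit completeness to confine the images $\varphi_t(\gamma(t))$ to a compact set (so that limits of $1$-jets exist) and carefully invoke real-analytic continuation to transfer agreement on subintervals to agreement of germs at the endpoint. Once these uniform estimates on normal radii are secured, the remaining connectedness arguments are routine.
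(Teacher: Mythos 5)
The paper offers no argument of its own here: the statement is quoted verbatim from Propositions 11.3--11.4 of Helgason's book, so the only thing to compare your proposal against is the classical proof itself. What you have written is essentially that classical proof: the local extension principle $\varphi=\exp_{\varphi(p)}\circ d\varphi_p\circ\exp_p^{-1}$ on normal balls (valid because the metrics are real-analytic and $N$ is complete, so $\exp$ on the target is globally defined and the pulled-back metric agrees with $g$ near $p$, hence on the whole connected normal ball), an open-and-closed argument in $t$ for part (a), and a uniform subdivision of the homotopy square for the monodromy statement (b). The architecture is correct, and part (b) in particular is exactly the standard argument, including the implicit use of uniqueness of germs of continuations along a fixed path.

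One step needs repair. In your closedness argument for (a) you confine the images $\varphi_{t_n}(\gamma(t_n))$ to a compact subset of $N$ by saying that isometries preserve lengths of rectifiable curves; but $\gamma$ is only assumed continuous, so it need not be rectifiable and this bound is not available as stated. The standard fix removes the limiting argument entirely: $\gamma([0,1])$ is compact, so the injectivity radius of $g$ is bounded below by some $r>0$ on a neighborhood of it; choose a finite partition $0=t_0<\cdots<t_N=1$ with each $\gamma([t_i,t_{i+1}])$ contained in the normal ball $B_r(\gamma(t_i))$, and extend the isometry step by step across these finitely many balls, each step supplied by completeness of $N$ together with real-analytic continuation (shrinking domains if necessary to keep the extensions injective). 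No convergence of $1$-jets is then required, and the same finite-cover device yields the uniform $\delta$ you invoke in (b). With that adjustment your argument is complete and matches the cited source.
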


The following theorem gives a sufficient condition for a local isometry on $U$ to extend a local isometry on $M$.
If it exists, it can be recovered by continuation along paths from some $x_0 \in U$; the only issue to check is that this is well-defined. 
The gist of the proof is this: 
Provided $\pi_1(x_0, U)$ surjects onto $\pi_1(M, x_0)$, any two paths between points $x_0$ and $x\in M$ will be homotopic to paths which may initially differ inside $U$ 
but which meet at some point in $U$
and coincide thereafter.  The above proposition guarantees that the continuations along the latter (hence the former) paths agree. 
\begin{theorem}\label{thm:globalext}
Suppose $(M, g)$ and $(N, h)$ are complete connected real-analytic Riemannian manifolds and $\varphi: U \to V$ is an isometry between connected open sets $U\subset M$ and $V\subset N$.
If, for some $x_0\in U$, the homomorphism $\iota_{*}: \pi_1(U, x_0) \to \pi_1(M, x_0)$ induced by inclusion $\iota:U\hookrightarrow M$ is surjective, then $\varphi$ can be extended to a surjective local isometry $\Phi: M\to N$. 

If, in addition, $\pi_1(V, \varphi(x_0))$ surjects onto $\pi_1(N, \varphi(x_0))$ (e.g., if $M$ and $N$ are diffeomorphic)
then $\varphi$ extends to an isometry $\Phi:(M, g)\to(N, h)$.
\end{theorem}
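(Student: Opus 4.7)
The plan is to construct $\Phi$ by continuation along paths based at a fixed point $x_0\in U$ and to use the surjectivity of $\iota_\ast$ to prove that the construction is path-independent. For each $x\in M$, fix a continuous path $\gamma:[0,1]\to M$ from $x_0$ to $x$, apply Proposition \ref{prop:pathcont}(a) to obtain a continuation $\{\varphi^{\gamma}_t\}_{t\in [0,1]}$ of $\varphi$ along $\gamma$, and set $\Phi(x) := \varphi^{\gamma}_1(x)$. Because $\Phi$ locally coincides with the isometry $\varphi^{\gamma}_1$, it will automatically be a local isometry once well-definedness is established.

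The heart of the proof is showing that $\Phi(x)$ does not depend on the choice of $\gamma$. Let $\sigma:[0,1]\to M$ be a second path from $x_0$ to $x$ with continuation $\{\psi_t\}$. The concatenation $\gamma\cdot\sigma^{-1}$ is a loop at $x_0$, and the surjectivity of $\iota_{\ast}$ furnishes a loop $\alpha:[0,1]\to U$ at $x_0$ with $[\alpha]=[\gamma\cdot\sigma^{-1}]$ in $\pi_1(M,x_0)$; equivalently, $\gamma$ is path-homotopic rel endpoints to $\alpha\cdot\sigma$ in $M$. Since $\alpha$ lies entirely in $U$, where $\varphi$ is globally defined, the constant family $\{\varphi\}$ (with domain taken to be any chain of open neighborhoods of $\alpha(t)$ lying in $U$) satisfies the definition of a continuation of $\varphi$ along $\alpha$ and returns to $\varphi$ near $x_0$ at $t=1$. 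Concatenating this with $\{\psi_t\}$ along $\sigma$ yields a continuation of $\varphi$ along $\alpha\cdot\sigma$ whose terminal map agrees with $\psi_1$ on a neighborhood of $x$. Proposition \ref{prop:pathcont}(b) then gives $\varphi^{\gamma}_1 = \psi_1$ near $x$, proving that $\Phi$ is well-defined.

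For the first assertion, since $(M,g)$ is complete and $N$ is connected, the local isometry $\Phi$ is a Riemannian covering map by a classical lifting argument (see, e.g., Chapter IV of \cite{KobayashiNomizu}), and in particular surjective. For the second assertion, the same construction applied to $\varphi^{-1}:V\to U$, using the surjectivity of $\pi_1(V,\varphi(x_0))\to\pi_1(N,\varphi(x_0))$, produces a surjective local isometry $\Psi:N\to M$ extending $\varphi^{-1}$. Both $\Phi$ and $\Psi$ are real-analytic as local isometries between real-analytic Riemannian manifolds, so $\Psi\circ\Phi:M\to M$ is real-analytic and coincides with $\Id_M$ on the open set $U$. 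By the identity principle for real-analytic maps on the connected manifold $M$, $\Psi\circ\Phi = \Id_M$; symmetrically $\Phi\circ\Psi = \Id_N$, so $\Phi$ is an isometry. The main obstacle in this plan is the well-definedness step: it requires threading the surjectivity of $\iota_\ast$ through the homotopy invariance of Proposition \ref{prop:pathcont}(b) in just the right way, so that the portion of the modified path lying in $U$ is handled by $\varphi$ itself rather than by a nontrivial continuation.
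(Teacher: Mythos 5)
Your proposal is correct and follows essentially the same strategy as the paper's proof: define $\Phi$ by analytic continuation along paths, use the surjectivity of $\pi_1(U,x_0)\to\pi_1(M,x_0)$ to absorb the difference of two paths into a loop in $U$ (where the continuation is just $\varphi$ itself) and invoke the homotopy invariance of continuation, then get surjectivity from completeness and bijectivity from the extension of $\varphi^{-1}$. The only cosmetic differences are that the paper symmetrizes the two paths via an explicit homotopy into paths $\beta^0,\beta^1$ rather than replacing one path by $\alpha\cdot\sigma$, and it concludes $\Psi\circ\Phi=\operatorname{Id}$ from rigidity of local isometries rather than the real-analytic identity principle.
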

\begin{proof} Let $x\in M$ and $\gamma^{0}$, $\gamma^1:I\to M$ be any continuous paths with $\gamma^{i}(0) = x_0$ and $\gamma^{i}(1) = x$. Here $I = [0, 1]$.
By Proposition \ref{prop:pathcont}, $\varphi$ admits continuations 
$\{\varphi^{0}_t\}_{t\in I}$ and $\{\varphi^{1}_t\}_{t\in I}$ along $\gamma^{0}$ and $\gamma^{1}$. We claim that $\varphi_1^{0} = \varphi_1^{1}$ in a neighborhood of $x$.

By assumption, $\gamma^{0}\cdot \bar{\gamma}^{1}$ is path-homotopic to a continuous loop $\alpha:I\to M$ based at $x_0$ with $\alpha(I)\subset U$. (Here the bar denotes the reverse 
parametrization.)
Fix some homotopy $H:I\times I\to M$ with $H(0, t) = \alpha(t)$ and $H(1, t) =(\gamma^{0}\cdot\bar{\gamma}^{1})(t)$ for all $t\in I$, and $H(s, 0) = H(s, 1) = x_0$ for all $s\in I$,
and consider the paths $\beta^0$, $\beta^1:I\to M$ defined by
\begin{align*}
 \beta^{0}(t) &= \left\{\begin{array}{lr}
		    \alpha(t) = H(0, t) & t\in [0, 1/2]\\
		    H(2t-1, 1/2) & t\in [1/2, 1],
                   \end{array}\right.\\                   
\beta^{1}(t) &= \left\{\begin{array}{lr}
		    \bar{\alpha}(t) = H(0, 1-t) & t\in [0, 1/2]\\
		    H( 2t-1, 1/2) & t\in [1/2, 1].
                   \end{array}\right. 
\end{align*}

Now let $\{\psi^{0}_t\}_{t\in I}$ and $\{\psi^{1}_t\}_{t\in I}$ be any continuations of $\varphi$ along $\beta^0$ and $\beta^1$. Note first that
$\psi^0_{1/2}$ and $\psi^1_{1/2}$ must coincide with $\varphi$ in a neighborhood of $\beta^0(1/2)= \beta^1(1/2) = \alpha(1/2)$. Indeed, since $\beta^{0}([0, 1/2])\subset U$
and $\beta^{1}([0, 1/2]) \subset U$, we can form continuations of $\varphi$ along these portions of $\beta^0$ and $\beta^1$ simply by restricting $\varphi$
to sufficiently small neighborhoods of the points of the paths. By uniqueness, these continuations must agree locally with the continuations
$\psi^{0}_t$ and $\psi^{1}_t$ for $t\leq 1/2$ as claimed.  But, since the paths $\beta^0$ and $\beta^1$ agree for $t \in [1/2, 1]$,
the collections $\{\psi^{0}_t|_{U_t^0\cap U_t^1}\}_{t\in [1/2, 1]}$ and $\{\psi^{1}_t|_{U^0_t\cap U^1_t}\}_{t\in [1/2, 1]}$ are both
continuations of the common isometry $\psi^{0}_{1/2} = \psi^1_{1/2} = \varphi$ along the path $\beta^0|_{[1/2, 1]}$.  
So $\psi^0_1$ and $\psi^1_1$ agree on a neighborhood of $x$.

On the other hand, $\beta^0$ and $\beta^1$ are path-homotopic to $\gamma^0$ and $\gamma^1$, respectively, by their construction.
So, by Proposition \ref{prop:pathcont}, $\varphi^0_1$ and $\varphi^1_1$ must agree with $\psi^0_1$ and $\psi_1^1$ and, hence, with each other, near $x$.

We now define $\Phi:M\to N$ at $x\in M$ by $\Phi(x) = \varphi_1(x)$, where $\{\varphi_t\}_{t\in I}$ is a continuation of $\varphi$ 
along any continuous path $\gamma:I\to M$ connecting
$x_0$ to $x$.
The discussion above shows that $\Phi$ is well-defined on $M$. Since it agrees with a local isometry in a neigborhood of each point, it is smooth
and satisfies $\Phi^*h = g$. Since $\Phi$ is a local isometry, it is an open map, and since $(M, g)$ and $(N, h)$ are complete, $\Phi$ is also closed.
Since $N$ is connected, $\Phi$ is therefore surjective. 

For the last claim, assuming that $\pi_1(V, \varphi(x_0))\to \pi_1(N, \varphi(x_0))$ is also surjective, we may apply the above argument to obtain an extensions 
of both $\varphi$ and $\varphi^{-1}$ to surjective local isometries $\Phi:(M, g)\to (N, h)$ and $\Psi: (N, h) \to (M, g)$.
Since $\Psi\circ\Phi$ and $\Phi\circ \Psi$ agree with the identity maps on  $U$ and $V$ and are themselves local isometries, they must agree
with the identity map on all of $M$ and $N$. So $\Phi$ is injective, and hence an isometry of $(M, g)$ onto $(N, h)$.
\end{proof}

Theorem \ref{thm:isomg} is then an easy consequence of the above theorem.

\begin{proof}[Proof of Theorem \ref{thm:isomg}] Suppose that the homomorphism $\pi_1(V, x_0) \to \pi_1(M, x_0)$ induced by inclusion $V\hookrightarrow M$ is surjective.
By Theorem \ref{thm:isom}, there is $r_1 > 0$ and an end $W\subset V$ diffeomorphic to $\Cc^{\Sigma}_{r_1}$ such that $\Isom(\Sigma, g_{\Sigma})\cong\Isom(W, g|_W)$.
Now, $V$ is diffeomorphic to $\Cc^{\Sigma}_{r_0}$, and the inclusion of $W \hookrightarrow V$ induces an isomorphism of fundamental groups, so $\pi_1(W, x_1)$
surjects onto $\pi_1(M, x_1)$ for some $x_1\in W$. Then \ref{thm:globalext} implies that any isometry
$\phi\in \Isom(W, g|_W)$ can be extended to an element $\Phi\in \Isom(M, g)$. This extension is unique, so the correspondence it defines is injective, and it is a homomorphism
since if $\Phi$ and $\Psi$ are the extensions of $\phi$ and $\psi$, respectively, then the extension of $\phi\circ \psi$ will agree with $\Phi \circ \Psi$
on $W$ and hence on all of $M$.
\end{proof}


\begin{thebibliography}{99}



\bibitem[B]{Bando}  S. Bando,  
  {\it Real analyticity of solutions of Hamilton's equation},
Math. Z., {\bf 195} (1987), no. 1, 93--97. 



\bibitem[CL]{ChowLuAsymptoticCone}
B. Chow and P. Lu,
{\it Uniqueness of asymptotic cones of complete noncompact shrinking gradient Ricci solitons with Ricci curvature decay},
C. R. Math. Acad. Sci. Paris {\bf 353} (2015), no. 11, 1007--1009. 


\bibitem[CRF]{RFV2P2} B. Chow, D. Glickenstein, C. Guenther, J. Isenberg,
T. Ivey, D. Knopf, P. Lu, F. Luo, and L. Ni,
  \textit{The Ricci flow: techniques and applications, Part II, Analytic aspects},
   Mathematical Surveys and Monographs {\bf 144} American Mathematical Society, Providence, RI, 2008. xxvi+458 pp.



 \bibitem[DW]{DancerWang} A. Dancer and M. Wang,
 {\it On Ricci solitons of cohomogeneity one},
 Ann. Global Anal. Geom. {\bf 39} (2011), no. 3, 259--292.


\bibitem[DK]{DeTurckKazdan} D. DeTurck and J. Kazdan,
\textit{Some regularity theorems in Riemannian geometry}
Ann. Sci. \'{E}cole Norm. Sup. (4) {\bf 14} (1981), no. 3, 249--260.  


\bibitem[D]{DeruelleExpConical}  A. Deruelle,
\textit{Smoothing out positively curved metric cones by Ricci expanders},
Geom. Funct. Anal. {\bf 26} (2016), no. 1, 188--249. 


\bibitem[FIK]{FeldmanIlmanenKnopf} M. Feldman, T. Ilmanen, and D. Knopf,
 {\it Rotationally symmetric shrinking and expanding gradient K\"ahler-Ricci solitons},
 J. Differential Geom. {\bf 65} (2003), no. 2, 169--209.




\bibitem[H]{Helgason} S. Helgason,
\textit{Differential geometry, Lie groups, and symmetric spaces},
Pure and Applied Mathematics, {\bf 80}, Academic Press, Inc. [Harcourt Brace Jovanovich, Publishers], New York-London, 1978, xv+628 pp. ISBN: 0-12-338460-5. 

\bibitem[I]{Ivey} T. Ivey,
{\it Local existence of Ricci solitons}, Manuscripta Math. {\bf 91} (1996), no. 2, 151--162.  
   
\bibitem[KN]{KobayashiNomizu} S. Kobayashi and K. Nomizu,
   \textit{Foundations of differential geometry. Vol. I.}
   Interscience Publishers, a division of John Wiley \& Sons, 
   New York-London, 1963 xi+329 pp.

\bibitem[K1]{KotschwarRFBU} B. Kotschwar,
\textit{Backwards uniqueness for the Ricci flow}
Int. Math. Res. Not. (2010), no. 21, 4064--4097.   

\bibitem[K2]{KotschwarAnalyticity} B. Kotschwar,
\textit{A local version of Bando's theorem on the real-analyticity of solutions to the Ricci flow}, Bull. London Math. Soc. {\bf 45} (2013), no. 1, 153--158.

   
\bibitem[K3]{KotschwarTimeAnalyticity} B. Kotschwar, 
\textit{Time-analyticity of solutions to the Ricci flow}, Amer. J. Math. {\bf 137} (2015), no. 2, 535--576. 

\bibitem[K4]{KotschwarKaehler} B. Kotschwar,
\textit{K\"ahlerity of shrinking gradient Ricci solitons asymptotic to K\"ahler cones},
J. Geom. Anal. 28 (2018), no. 3, 2609--2623. 


\bibitem[KW]{KotschwarWangConical} B. Kotschwar and L. Wang, 
\textit{Rigidity of asymptotically conical shrinking gradient Ricci solitons}, J. Diff. Geom. \textbf{100} (2015), no. 1, 55--108. 

\bibitem[LW]{LottWilson} J. Lott and P. Wilson, 
\textit{Note on asymptotically conical expanding Ricci solitons},
Proc. Amer. Math. Soc. {\bf 145} (2017), no. 8, 3525--3529. 


\bibitem[LZ]{LottZhang} J. Lott and Z. Zhang,
\textit{Ricci flow on quasiprojective manifolds, II}
J. Eur. Math. Soc. {\bf 18} (2016), no. 8, 1813--1854. 

\bibitem[MW1]{MunteanuWangKaehler} O. Munteanu and J. Wang, 
\textit{Topology of K\"ahler Ricci solitons},
J. Diff. Geom. {\bf 100} (2015), no. 1, 109--128. 

\bibitem[MW2]{MunteanuWangConical1} O. Munteanu and J. Wang,
\textit{Geometry of shrinking Ricci solitons},
Compos. Math. {\bf 151} (2015), no. 12, 2273--2300. 


\bibitem[MW3]{MunteanuWangConical2} O. Munteanu and J. Wang,
{\it Conical structure for shrinking Ricci solitons},  J. Eur. Math. Soc. {\bf 19} (2017), no. 11, 3377--3390, 
MR3713043, Zbl 06802928. 


\bibitem[M]{Myers} S. B. Myers, 
\textit{Riemannian manifolds in the large.}
Duke Math. J. {\bf 1 } (1935), no. 1, 39--49. 


\bibitem[N]{Nomizu}  K. Nomizu,
\textit{Recent development in the theory of connections and holonomy groups},
Advances in Math. {\bf 1} 1961 fasc. 1, 1--49.

\bibitem[SS]{SchulzeSimon}
F. Schulze and M. Simon,
\textit{Expanding solitons with non-negative curvature operator coming out of cones},
Math. Z. {\bf 275} (2013), no. 1-2, 625--639. 

\bibitem[S]{Shao} Y. Shao,
{\it A family of parameter-dependent diffeomorphisms acting on function spaces over a Riemannian manifold and applications to geometric flows},
NoDEA Nonlinear Differential Equations Appl. {\bf 22} (2015), no. 1, 45--85. 

\bibitem[W]{Wylie}  W. Wylie, \emph{Complete shrinking Ricci solitons have finite fundamental group},
Proc. Amer. Math. Soc. {\bf 136} (2008), 1803--1806.


\bibitem[Y]{Yang} B. Yang,
{\it A characterization of noncompact Koiso-type solitons},
Internat. J. Math. {\bf 23} (2012), no. 5, 1250054, 13 pp. 


\end{thebibliography}
\end{document}